\theoremstyle{plain}
\newtheorem{lem}{Lemma}[section]
\newtheorem{thm}[lem]{Theorem}
\theoremstyle{definition}
\newtheorem{defn}{Definition}[section]
\theoremstyle{remark}
\newtheorem{rem}{Remark}[section]
\newcommand{\zz}{\partial\Omega}
\begin{document}
\title{\MakeUppercase {\large\bf Determining a stationary mean field game system from full/partial boundary measurement} }

\author{
Ming-Hui Ding\thanks{Department of Mathematics, City University of Hong Kong, Kowloon, Hong Kong, China.\ \  Email: mingding@cityu.edu.hk}
\and
Hongyu Liu\thanks{Department of Mathematics, City University of Hong Kong, Kowloon, Hong Kong, China.\ \ Email: hongyu.liuip@gmail.com; hongyliu@cityu.edu.hk}
\and
Guang-Hui Zheng\thanks{School of Mathematics, Hunan University, Changsha 410082, China.\ \ Email: zhenggh2012@hnu.edu.cn; zhgh1980@163.com}
}

\date{}
\maketitle

\begin{center}{\bf ABSTRACT}
\end{center}\smallskip
In this paper, we propose and study the utilization of the Dirichlet-to-Neumann (DN) map to uniquely identify the discount functions $r, k$ and cost function $F$ in a stationary mean field game (MFG) system. This study features several technical novelties that make it highly intriguing and challenging.  Firstly, it involves a coupling of two nonlinear elliptic partial differential equations. Secondly, the simultaneous recovery of multiple parameters poses a significant implementation challenge. Thirdly, there is the probability measure constraint of the coupled equations to consider. Finally, the limited information available from partial boundary measurements adds another layer of complexity to the problem. Considering these challenges and problems, we present an enhanced higher-order linearization method to tackle the inverse problem related to the MFG system. Our proposed approach involves linearizing around a pair of zero solutions and fulfilling the probability measurement constraint by adjusting the positive input at the boundary. It is worth emphasizing that this technique is not only applicable for uniquely identifying multiple parameters using full-boundary measurements but also highly effective for utilizing partial-boundary measurements.

\smallskip
{\bf keywords}: Stationary mean field games; multiple parameters; unique identifiability; full/partial boundary measurement; CGO solutions.

\begin{center}\small
\tableofcontents
\end{center}

\section{Introduction}
\subsection{Mathematical setup and statement of main results}

We first introduce the mathematical setup of the study.
Let $\Omega$ be a smooth bounded domain of $\mathbb{R}^n, n\geq1$ with a  class $C^{2+\alpha}$ boundary for some $\alpha\in(0,1)$. We focus on the following stationary mean field games (MFG) system
\begin{align}
\label{um0}
\begin{cases}
\displaystyle-v\Delta u+\frac{1}{2}|\nabla u|^2+k(x)u=F(x,m)
\ \ \hspace*{1.4cm} &\mathrm{in}\  \Omega,\medskip\\
\displaystyle -v\Delta m-\mathrm{div}(m\nabla u)+r(x)m=0
\hspace*{2.50cm} &\mathrm{in}\ \Omega,\medskip\\
\displaystyle  u=f,\ m=g\hspace*{5.3cm} &\mathrm{on}\  \partial\Omega,
\end{cases}
\end{align}
where $\Delta$ and $\mathrm{div}$ are the Laplacian and divergence operator with respect to the $x$-variable, respectively. In this model, $u$ and $m$ represent the value function and the agent distribution function of the game, respectively. The coefficient $v$ is a positive constant, while the $k(x)$ and $r(x)$ are nonnegative functions. The cost function $F(x,m)$ signifies the interaction between the agent and the aggregate. On the boundary, the values of $u$ and $m$ are represented by the functions $f$ and $g$, respectively. It is worth highlighting that, as $m$ represents a probability distribution function, its non-negativity is essential. Further details about the model can be found in Section \ref{sub1.2}.

This paper aims to investigate the inverse problem of identifying the coefficients $k(x)$, $r(x)$, and the cost function $F(x,m)$ of the MFG \eqref{um0} by leveraging the knowledge of the Dirichlet-to-Neumann (DN) map on either the full or partial boundary. In this inverse problem, it is essential to guarantee that $m$ is non-negative. The non-negativity of the boundary value $g$ essentially implies the non-negativity of $m$, which can be easily deduced from the system \eqref{um0}. To address this inverse problem, we begin by establishing the DN mapping on the full boundary, which is defined as follows
\begin{align*}%\label{IP}
\mathbf{{\Lambda}}_{k,r,F}:\ &\xi_1\times\xi_1^+\rightarrow C^{1+\alpha}(\partial\Omega)\times C^{1+\alpha}(\partial\Omega)\\
&(f,g)\ \mapsto(\partial_\nu u, \partial_\nu m){|}_{\partial \Omega},
\end{align*}
where $\xi_1=\{ f\in C^{2+\alpha}(\partial\Omega)|\|f\|_{C^{2+\alpha}(\partial\Omega)} <\delta_1\}$ and $\xi_1^+=\{ g\in C^{2+\alpha}(\partial\Omega)|g> 0 \ \mathrm{and} \ \|g\|_{C^{2+\alpha}(\partial\Omega)}$ $<\delta_1\}$ with a small $\delta_1$.  The notation that $\partial_\nu u(x)=\nabla u(x)\cdot\nu(x)$, where $\nu=(\nu_1, \nu_2,..., \nu_n)\in \mathbb{S}^{n-1}$ is a outer normal vector, and $\mathbb{S}^{n-1}$ denote a unit sphere of $\mathbb{R}^n$. Next, we introduce the partial DN map $\mathbf{\Lambda}^\mathbf{p}_{k,r,F}$ as follows:
\begin{align*}
\mathbf{\Lambda}^\mathbf{p}_{k,r,F}:\ &\xi_2\times\xi_2^+\rightarrow C^{1+\alpha}(\mathcal{U}_{-})\times C^{1+\alpha}(\mathcal{U}_{-})\\
&(f, g)\  \mapsto\ (\partial_\nu u,\partial_\nu m)|_{\mathcal{U}_{-}},
\end{align*}
where $\xi_2=\{ f\in C_0^{2+\alpha}(\mathcal{U}_{+})|\| f\|_{C^{2+\alpha}(\mathcal{U}_{+})} <\delta_2\}$ and $\xi_2^+=\{ g\in C_0^{2+\alpha}(\mathcal{U}_{+})| g> 0\ \ \mathrm{and}\ \| g\|_{C^{2+\alpha}(\mathcal{U}_{+})}$ $<\delta_2\}$ for a small $\delta_2$, here $\mathcal{U}_{\pm}$ denote a neighborhood of $\partial\Omega_{\pm}(\lambda')$, and the $\partial\Omega_{\pm}(\lambda')$ is defined by
\begin{align*}
\partial\Omega_{\pm}(\lambda')=\{ x\in\partial \Omega;\pm\lambda'\cdot\nu(x)>0\}
\end{align*}
with a given $\lambda'\in \mathbb{S}^{n-1}$. To that end, the inverse problems can be expressed in the following general form:
\begin{align}\label{IP1}
\mathbf{{\Lambda}}_{k,r,F}(f,g)\rightarrow k, r, F\ \   \mathrm{for}\  (f, g)\in\xi_1\times \xi_1^+,
\end{align}
and
\begin{align}\label{IP2}
\mathbf{\Lambda}^\mathbf{p}_{k,r,F}(f,g)\rightarrow k, r, F\ \   \mathrm{for}\  (f, g)\in\xi_2\times \xi_2^+.
\end{align}
In the inverse problems \eqref{IP1} and \eqref{IP2}, we concentrate on the theoretical unique identifiability issue. In the general formulation, we establish sufficient conditions to guarantee that \eqref{IP1} and \eqref{IP2} are uniquely identifiable, which can be formulated as follows
\begin{align*}%\label{IP}
\small
\mathbf{{\Lambda}}_{k_1,r_1,F_1}(f,g)=\mathbf{{\Lambda}}_{k_2,r_2,F_2}(f,g)
\ \ \mathrm{if\ and\ only\ if}\  (k_1,r_1,F_1)=(k_2,r_2,F_2)\ \mathrm{for}\  (f, g)\in\xi_1\times \xi_1^+,
\end{align*}
and
\begin{align*}%\label{IP}
\small
\mathbf{{\Lambda}}^\mathbf{p}_{k_1,r_1,F_1}(f,g)=\mathbf{{\Lambda}}^\mathbf{p}_{k_2,r_2,F_2}(f,g)
\ \ \mathrm{if\ and\ only\ if}\  (k_1,r_1,F_1)=(k_2,r_2,F_2)\ \mathrm{for}\  (f, g)\in\xi_2\times \xi_2^+,
\end{align*}
where the nonnegative functions $k_j, r_j\in C^{\alpha}(\overline{\Omega})$, and $F_j$ belongs in a priori admissible class $\mathcal{A}$ with $j=1,2$. For the priori admissible class $\mathcal{A}$, we also give its definition as follows
\begin{defn}\label{adm}
($\mathbf{Admissible\ class}$). We say $F(x,z):\Omega\times\mathbb{C}\rightarrow\mathbb{C}\in \mathcal{A}$, if it satisfies the following equations:\\
(i)\ The map $z\rightarrow F(\cdot,z)$ is holomorphic with value in $C^{\alpha}(\overline{\Omega})$;\\
(ii)\ $F(x,0)=\partial_zF(x,0)=0$ for all $x\in \overline{\Omega} $.
\end{defn}
From the Definition \ref{adm}, it means that $F(x,z)$ can be expanded into a power series as following:
\[
F(x,z)=\sum_{i=2}^{\infty}F^{(i)}(x)\frac{z^i}{i!},
\]
where $F^{(i)}(x)=\partial_z^iF(x,0)\in C^{\alpha}(\overline{\Omega})$.

Next, we can state the main result of the inverse problems \eqref{IP1} and \eqref{IP2}, which demonstrates that in a generic scenario, it is possible to recover the coefficients $k(x), r(x)$ and the running cost $F$ from the measurement map $\mathbf{{\Lambda}}_{k,r,F}$ or $\mathbf{{\Lambda}}^{\mathbf{p}}_{k,r,F}$.

\begin{thm}\label{alldata}
Let $\Omega\subset \mathbb{R}^n$ and $n\geq 3$. Assume $F_j$ in the admissible class $\mathcal{A}$, the nonnegative  functions $k_j, r_j\in C^{\alpha}(\overline{\Omega})$ for $j=1, 2$. Let $\mathbf{\Lambda}_{k_j,r_j,F_j}$ be the full DN map associated the  MFG \eqref{um0}.
If for any $f\in \xi_1, g\in \xi_1^+$, one has
\begin{align}\label{cod}
\mathbf{\Lambda}_{k_1,r_1,F_1}(f,g)=\mathbf{\Lambda}_{k_2,r_2,F_2}(f,g),
\end{align}
 then we have
 \[
 k_1=k_2, r_1=r_2 \ in\ \Omega;\ F_1=F_2\ in\ \Omega\times\mathbb{R^+}.
 \]
\end{thm}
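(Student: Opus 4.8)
The plan is to run a higher-order linearization of the forward map about the trivial pair $(u,m)=(0,0)$, which by Definition~\ref{adm} (where $F(x,0)=\partial_zF(x,0)=0$) is an exact solution of \eqref{um0} with zero boundary data. To probe the map while respecting the probability constraint $m>0$, I would drive the system with the $N$-parameter family
\begin{equation*}
f=\sum_{l=1}^{N}\varepsilon_l f_l,\qquad g=\sum_{l=1}^{N}\varepsilon_l g_l,\qquad f_l\in\xi_1,\ \ g_l\in\xi_1^+,
\end{equation*}
choosing each $g_l>0$ so that $g>0$ for all $\varepsilon$ in the (small, open) positive orthant, which is precisely the admissible range. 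Granting the well-posedness and analytic dependence of $(u_\varepsilon,m_\varepsilon)$ on $\varepsilon$ near the base point, the map $\varepsilon\mapsto\mathbf{\Lambda}_{k,r,F}(f,g)$ extends real-analytically to a full neighborhood of the origin; since it is known on the open positive orthant, all of its mixed derivatives at $\varepsilon=0$ are recoverable. This is how the positivity requirement is reconciled with the differentiation.

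Differentiating \eqref{um0} once in $\varepsilon_l$ at $\varepsilon=0$ and writing $u^{(l)}=\partial_{\varepsilon_l}u_\varepsilon|_0$, $m^{(l)}=\partial_{\varepsilon_l}m_\varepsilon|_0$, I observe that the quadratic term $\tfrac12|\nabla u|^2$, the transport term $\mathrm{div}(m\nabla u)$, and the coupling $F(x,m)$ are all of order $\varepsilon^2$ at the base point (because $\nabla u$, $m$, and $\partial_zF(x,0)$ vanish there). Hence the first variations satisfy the \emph{decoupled} Schr\"odinger problems
\begin{equation*}
(-v\Delta+k)\,u^{(l)}=0,\quad u^{(l)}|_{\partial\Omega}=f_l;\qquad(-v\Delta+r)\,m^{(l)}=0,\quad m^{(l)}|_{\partial\Omega}=g_l.
\end{equation*}
Equality \eqref{cod} then yields $\partial_\nu u_1^{(l)}=\partial_\nu u_2^{(l)}$ and $\partial_\nu m_1^{(l)}=\partial_\nu m_2^{(l)}$ for all admissible boundary inputs, i.e. the DN maps of $-v\Delta+k_j$ and of $-v\Delta+r_j$ coincide. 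Invoking the classical solution of the Calder\'on problem for the Schr\"odinger equation in dimension $n\ge3$ (Sylvester--Uhlmann), I conclude $k_1=k_2$ and $r_1=r_2$ in $\Omega$; in particular $u^{(l)}$ and $m^{(l)}$ are henceforth common to both systems.

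To capture $F$, I would iterate to higher order. At second order, with $u^{(lj)}=\partial_{\varepsilon_l}\partial_{\varepsilon_j}u_\varepsilon|_0$, the $u$-equation reads $(-v\Delta+k)u^{(lj)}=F^{(2)}(x)\,m^{(l)}m^{(j)}-\nabla u^{(l)}\cdot\nabla u^{(j)}$. Taking the difference of the two systems, the already-matched quantities $k,r,u^{(l)},m^{(l)}$ cancel, so $w:=u_1^{(lj)}-u_2^{(lj)}$ has vanishing Cauchy data --- zero Dirichlet data from the boundary input and zero Neumann data from \eqref{cod} --- and solves $(-v\Delta+k)w=(F_1^{(2)}-F_2^{(2)})\,m^{(l)}m^{(j)}$. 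Pairing with any solution $\phi$ of $(-v\Delta+k)\phi=0$ and integrating by parts (all boundary terms drop by the Cauchy data) gives
\begin{equation*}
\int_\Omega\big(F_1^{(2)}-F_2^{(2)}\big)\,m^{(l)}m^{(j)}\,\phi\,dx=0.
\end{equation*}
Proceeding by induction on the order $N$, the assumption $F_1^{(i)}=F_2^{(i)}$ for $2\le i<N$ cancels all lower-order source terms and produces the analogous identity for $F_1^{(N)}-F_2^{(N)}$ tested against a product of $N$ solutions of $(-v\Delta+r)$ and one solution of $(-v\Delta+k)$.

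The crux --- and the step I expect to be hardest --- is converting these orthogonality relations into $F_1^{(i)}\equiv F_2^{(i)}$, whence $F_1=F_2$ on $\Omega\times\mathbb{R}^+$ by holomorphy in $z$. For this I would substitute complex geometrical optics (CGO) solutions $m^{(l)}\sim e^{\rho_l\cdot x}$ and $\phi\sim e^{\rho_0\cdot x}$, with $\rho_0,\dots,\rho_N\in\mathbb{C}^n$ satisfying $\rho_p\cdot\rho_p=0$, $|\rho_p|\to\infty$, and $\rho_0+\cdots+\rho_N=-\mathrm{i}\eta$ for an arbitrary $\eta\in\mathbb{R}^n$; such a selection exists precisely because $n\ge3$. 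As the CGO remainders decay, the product collapses to $e^{-\mathrm{i}\eta\cdot x}$ and the identity becomes $\widehat{F_1^{(i)}-F_2^{(i)}}(\eta)=0$ for every $\eta$, forcing $F_1^{(i)}=F_2^{(i)}$. The delicate technical points here are the CGO construction and remainder estimates for the merely $C^\alpha$ potentials $k,r$, together with the bookkeeping that extends the identity from positive boundary traces to arbitrary (in particular CGO) solutions by multilinearity, and keeps the induction compatible with the positivity-restricted inputs --- the latter being secured once the joint analyticity in $\varepsilon$ makes the complete array of mixed derivatives accessible from admissible data alone.
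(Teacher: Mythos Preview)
Your proposal is correct and follows the same high-level architecture (higher-order linearization around $(0,0)$, decoupled Schr\"odinger problems at first order, orthogonality identities at higher order, CGO substitution), but differs from the paper in three noteworthy respects.

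\textbf{Positivity handling.} You keep every $g_l>0$ and recover arbitrary (in particular CGO) inputs by analyticity and multilinearity. The paper instead allows a \emph{single} $g_2\in C^{2+\alpha}(\partial\Omega)$ to be arbitrary while $g_l\in\xi_1^+$ for $l\neq2$; since $\varepsilon_2$ can be shrunk so that $g=\sum_l\varepsilon_l g_l>0$, the constraint is met directly and no extension argument is needed. This gives a cleaner route to an unrestricted $m^{(2)}$.

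\textbf{Recovering $k,r$.} You invoke Sylvester--Uhlmann as a black box; the paper re-derives the step by subtracting the two first-order systems, pairing with an auxiliary solution $z$ of $(-v\Delta+r_1)z=0$, and inserting an explicit CGO pair so that the product collapses to $e^{\mathrm{i}\xi\cdot x}$. Because the CGO solutions are only $H^1$ while the DN map requires $C^{2+\alpha}$ data, the paper interposes a Runge approximation lemma to pass from one class to the other; this is the precise mechanism behind what you flag as the ``delicate technical point'' on regularity.

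\textbf{Recovering $F^{(N)}$.} You propose $N{+}1$ CGO phases $\rho_0,\dots,\rho_N$ summing to $-\mathrm{i}\eta$. The paper takes a shorter path: only \emph{two} CGO solutions are used --- one for $m^{(2)}$ and one for the test function $\varpi$ solving $(-v\Delta+k)\varpi=0$ --- while $m^{(1)}$ (and at higher order the remaining $m^{(l)}$) is kept as the fixed \emph{positive} solution arising from $g_1>0$ and the maximum principle. The identity then reads $\int_\Omega(F_1^{(N)}-F_2^{(N)})(m^{(1)})^{N-1}e^{\mathrm{i}\xi\cdot x}\,dx=0$, whence $F_1^{(N)}=F_2^{(N)}$ since $m^{(1)}>0$. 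This avoids the combinatorics of selecting $N{+}1$ compatible complex phases and, together with the single-exception trick above, is the paper's main simplification over the route you sketched.
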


\begin{thm}\label{pdata}
Let $\Omega\subset \mathbb{R}^n$ and $n\geq 3$. Assume $F_j$ in the admissible class $\mathcal{A}$, the nonnegative  functions $k_j, r_j\in C^{\alpha}(\overline{\Omega})$ for $j=1,2$, the sub-domain $\Omega'\subseteq \Omega$ satisfying $\partial\Omega\subseteq\partial \Omega'$ and $k_1=k_2, r_1=r_2,F_1=F_2$ in $\Omega'$. Let $\mathbf{\Lambda}^{\mathbf{p}}_{k_j,r_j,F_j}$ be the partial DN map associated the  MFG \eqref{um0}. If for any $f\in \xi_2, g\in \xi_2^+$, one has
\begin{align*}%\label{cod}
\mathbf{\Lambda}^{\mathbf{p}}_{k_1,r_1,F_1}(f,g)=\mathbf{\Lambda}^{\mathbf{p}}_{k_2,r_2,F_2}(f,g),
\end{align*}
 then we have
 \[
 k_1=k_2, r_1=r_2 \ in\ \Omega;\ F_1=F_2\ in\ \Omega\times\mathbb{R^+}.
 \]
\end{thm}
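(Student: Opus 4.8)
The plan is to prove the two theorems in parallel by the higher‑order linearization method, linearizing the coupled system around the trivial solution pair $(u,m)=(0,0)$ and recovering the three unknowns in a hierarchy: first the zeroth‑order potentials $k$ and $r$ from the first‑order linearization, then the Taylor coefficients $F^{(i)}$ of the cost function by induction on the order of linearization. Throughout, the probability‑measure constraint $g>0$ is accommodated by feeding in boundary data of the form $g=\sum_{l}\vp_l g_l$ with $\vp_l>0$ and $g_l>0$, so that $g>0$ while the base point remains $(0,0)$; the resulting multilinear boundary maps are then known only on positive inputs, but by multilinearity and the fact that the positive cone spans $C^{2+\alpha}$, each is determined on all admissible inputs, after which complex (sign‑changing) CGO data may be inserted freely. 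All expansions are justified by the well‑posedness of the forward problem for small boundary data, ensuring analytic dependence of $(u,m)$ on the inputs.

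First I would carry out the first‑order linearization. Writing $f=\vp f_1$, $g=\vp g_1$ with $g_1>0$ and differentiating at $\vp=0^+$, the holomorphy and the normalization $F(x,0)=\partial_zF(x,0)=0$ force the first‑order profiles to satisfy the decoupled Schrödinger‑type equations $-v\Delta u^{(1)}+k\,u^{(1)}=0$ and $-v\Delta m^{(1)}+r\,m^{(1)}=0$, since both the gradient‑quadratic term $\tfrac12|\nabla u|^2$ and the transport term $\mathrm{div}(m\nabla u)$ vanish to this order at the zero background. Equality of the (partial) DN maps then yields equality of the linearized DN maps for $-v\Delta+k$ and for $-v\Delta+r$; invoking the determination of the potential for the Schrödinger equation (Sylvester–Uhlmann for the full‑data case, and the partial‑data Carleman/CGO theory together with the a priori agreement $k_1=k_2$, $r_1=r_2$ on $\Omega'$ for the partial‑data case) gives $k_1=k_2$ and $r_1=r_2$ in $\Omega$.

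Next I would recover $F$ by induction on $N\ge 2$. Taking $g=\sum_{l=1}^{N}\vp_l g_l$ with $g_l>0$, $f=\sum_{l=1}^N \vp_l f_l$, and applying the mixed derivative $\partial_{\vp_1}\cdots\partial_{\vp_N}$ at $\vp=0^+$, the $N$‑th order profile $w=u^{(1,\dots,N)}$ of the first equation obeys $-v\Delta w+k\,w=F^{(N)}(x)\,m^{(1)}\cdots m^{(N)}+R_N$, where $R_N$ collects the contributions of $\tfrac12|\nabla u|^2$, of the transport term, and of the lower‑order coefficients $F^{(2)},\dots,F^{(N-1)}$. Because $k$, $r$ and, by the induction hypothesis, $F^{(2)},\dots,F^{(N-1)}$ coincide for the two systems, the remainder $R_N$ is identical for both, so $w_1-w_2$ solves $-v\Delta(w_1-w_2)+k(w_1-w_2)=(F_1^{(N)}-F_2^{(N)})\,m^{(1)}\cdots m^{(N)}$ with vanishing Cauchy data on the measured boundary. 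Testing against a CGO solution $\phi$ of $-v\Delta\phi+k\phi=0$ and integrating by parts removes the Cauchy data and produces the orthogonality relation $\int_\Omega (F_1^{(N)}-F_2^{(N)})\,m^{(1)}\cdots m^{(N)}\,\phi\,dx=0$.

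The final and most delicate step is to turn this relation into $F_1^{(N)}=F_2^{(N)}$. Fixing $m^{(3)}=\cdots=m^{(N)}=\mu_0$, a single positive solution of the $r$‑equation (positive in $\Omega$ by the maximum principle), reduces the identity to the triple‑product orthogonality $\int_\Omega G\, m^{(1)} m^{(2)}\phi\,dx=0$ with $G=(F_1^{(N)}-F_2^{(N)})\mu_0^{N-2}$; I would then take $m^{(1)},m^{(2)}$ and $\phi$ to be CGO solutions with complex null phases $\rho_1,\rho_2,\rho_3$ arranged so that $\rho_1+\rho_2+\rho_3=-i\xi$, whence the product approximates $e^{-i\xi\cdot x}$ and Fourier inversion forces $G\equiv 0$, so $F_1^{(N)}=F_2^{(N)}$ since $\mu_0>0$; summing the Taylor series yields $F_1=F_2$ on $\Omega\times\mathbb{R}^+$. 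The principal obstacle is exactly this completeness argument in the partial‑data regime: the three CGO solutions must be produced from Carleman estimates adapted to the half‑space geometry $\partial\Omega_\pm(\lambda')$ and must carry controlled traces on the unmeasured boundary, and it is precisely the a priori agreement of the coefficients on the subdomain $\Omega'$ (with $\partial\Omega\subseteq\partial\Omega'$) that allows the unmeasured boundary contributions to be discarded so that the threefold product of solutions remains complete.
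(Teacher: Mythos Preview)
Your overall architecture matches the paper's: linearize around $(0,0)$, read off $k$ and $r$ from the decoupled first-order Schr\"odinger problems, then recover the Taylor coefficients $F^{(N)}$ by induction using an integral identity tested against a solution of $-v\Delta\phi+k\phi=0$. Two choices in your sketch, however, diverge from the paper in ways that matter.

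\emph{Handling of the positivity constraint.} You keep every $g_l>0$ and then extend the multilinear DN data to sign-changing inputs by the ``positive cone spans $C^{2+\alpha}$'' argument. The paper instead builds the positivity directly into the ansatz: only $g_l$ for $l\neq 2$ are taken positive, while $g_2$ is allowed to be an \emph{arbitrary} element of $C^{2+\alpha}$ (respectively $C_0^{2+\alpha}(\mathcal U_+)$), and one simply observes that $g=\sum_l\vp_lg_l>0$ as soon as $\vp_2|g_2|$ is dominated by $\vp_1g_1$. This lets the paper insert a genuine CGO boundary trace in the slot $m^{(2)}$ without any extension argument.

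\emph{Number of CGO factors and the partial-data step.} At order $N$ you freeze $m^{(3)},\dots,m^{(N)}$ to a fixed positive solution $\mu_0$ and then try to run a \emph{three}-solution completeness argument with $m^{(1)},m^{(2)},\phi$ all CGO. The paper freezes one more slot: $m^{(1)}$ is kept as a fixed positive solution (maximum principle, from $g_1>0$), so the identity becomes
\[
\int_\Omega (F_1^{(N)}-F_2^{(N)})\,m^{(1)}\,\bigl(m^{(2)}\phi\bigr)\,dx=0,
\]
and only the pair $m^{(2)},\phi$ carries CGO structure. This is precisely the Bukhgeim--Uhlmann two-weight configuration $e^{\pm(\varphi+\mathrm i\psi)/h}$, which is what the paper's Carleman estimate (Lemma~\ref{Car-est}) and boundary-vanishing CGO (Lemma~\ref{part-sol}(ii)) are designed for; combined with the Runge approximation on $\tilde\Omega=\Omega\setminus\Omega'$ (Lemma~\ref{prung}) and the a priori equality on $\Omega'$, the unmeasured boundary terms drop out and one lands on $\int_{\tilde\Omega}(F_1^{(N)}-F_2^{(N)})m^{(1)}e^{\mathrm i\xi\cdot x}\,dx=0$, after which Paley--Wiener and $m^{(1)}>0$ finish.

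Your three-CGO route is not wrong in the full-data case, but in the partial-data case it is exactly the ``principal obstacle'' you flag: with three phases whose real parts must cancel, you cannot keep all Carleman weights equal to $\pm\lambda\cdot x$ for a single $\lambda$ near $\lambda'$, so the boundary-support bookkeeping that makes the $\mathcal U_\pm$ splitting work breaks down. The fix is the paper's: freeze $m^{(1)}$ as well, reduce to two CGO factors, and use the pair-based partial-data machinery directly.
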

\subsection{Background and motivation}\label{sub1.2}
MFG systems are a relatively new area of research in recent years, which aims to study the behavior of agents as their number tends to infinity. The theory was first introduced in the seminal publications of Lasry and Lions \cite {LL2004,LL2006,LL2007} and Huang-Caines-Malhame \cite {HMC2006,HMC2007,HCM2007} in 2006-2007, and has since received more attention and emerged from a growing amount of literature. The MFG system consists of the Hamilton-Jacobi-Bellman (HJB) equation coupled with the Kolmogorov-Fokker-Planck (KFP) equation, where the HJB equation represents the value function associated with the potential stochastic optimal control problem faced by the agent and the KFP equation is used to describe the probability density of agents in the game state space.
The MFG systems have strong applications in a wide range of areas such as economics, stock development, and transportation \cite{GLL2010,A2014,GPV2016,CD2018}. The forward problem for MFG systems has been extensively and intensively studied in the literature, and we refer to \cite{C2015,CDLL2019,ACD2020,CGPT2015} to obtain the well-posedness results on the forward problem associated with MFG systems in the current study.

 In this paper, we consider a general stationary MFG system \eqref{um0}, which emerges from the study of the behavior of infinite horizon MFG. This general form includes models in some special forms, such as smooth MFG models ($k$ and $r$ are vanishing)\cite{C2016,LL2004} and some discounted MFG models \cite{FGT2019, ABC2019}. In system \eqref{um0}, the Hamiltonian is the canonical quadratic form $|\nabla u|^2/2$, and we focus on the case that $F$ depends on $m$ locally. It is worth noting that in contrast to the periodic boundary conditions considered in \cite{LL2006}, we study MFG system with Dirichlet boundary conditions, which represent agents entering or exiting the game at a certain cost via $\partial\Omega$, and in fact, such MFG systems are more realistic settings \cite{OS2022,FGT2019}. In this model, $m$ is also not the usual probability density function, but it is emphasized that $m$ is non-negative. Agents can also leave the graveyard state at an exponential rate, which is encoded in the discount functions $k(x)$ and $r(x)$, where the discount appears in the $k(x)u$ term of the Hamilton-Jacobi equation and the $r(x)m$ term of the Kolmogorov-Fokker-Planck. The term involving $k$ and $r$
 in  \eqref{um0} are also relevant in the context of temporal semi-discretizations of time-dependent MFG systems.

 There is still very limited research on the inverse problem of MFG systems. There are only several numerical studies \cite{CFLO2022,DWO2022} and theoretical studies \cite{LMZ2023,LZ2022,LZ2023} for several MFG inverse problems via the boundary measurement associated with infinitely many events. There are also several recent works \cite{ILY,Kl23,KlAv,KLL1,KLL2,LY} on MFG inverse problems associated with a single-event measurement.  We note that at present, the study of non-negative constraints on the inverse problem associated with MFG systems is only rigorously discussed accordingly in \cite{LMZ2023, LZ2023, LZ2022} in several different contexts, as well as in the study of the coupled systems for biological populations\cite{LL2023,DHZ2023}. It is also pointed out that the previous studies on MFG systems have primarily investigated systems for a finite horizon with the Neumann or periodic boundaries. In these studies, the unknown parameters were uniquely identified using measurements of the full boundary, without considering the use of the more practically measured partial boundary data. Considering the importance of studying infinite horizon behavior and the practical need for measurements, we are mainly concerned with the inverse problem associated with MFG systems with the Dirichlet boundaries on infinite horizons. We utilize a more practical full or partial DN map to achieve the unique identifiability of multiple unknown coefficients $k(x), r(x), F(x,m)$ and still guarantee the non-negativity of $m$.

\subsection{Technical developments and discussion}\label{sub1.3}

As mentioned early, our focus is on studying the unique identifiability of the coefficients $k(x)$, $r(x)$, and the cost function $F(x,m)$ in the steady-state MFG system \eqref{um0}. Before investigating the inverse problem, we first examine the existence and uniqueness of solutions to the forward problem of this MFG system. The proofs regarding the existence and uniqueness of solutions for MFG systems rely on the implicit function theorem in complex spaces \cite{I1993,DT1998}. This approach currently provides only local well-posedness results for the forward problem, which are discussed in detail in Section \ref{sec2}.

For the study of the inverse problem, we mainly use an easily observable DN map on the full or partial boundary to simultaneously identify multiple parameters in the stationary MFG system. This imposes a significant challenge to the study of the inverse problem due to the non-linear coupling, simultaneous multi-parameter reconstruction, and non-negative constraints. Taking inspiration from the general use of higher-order linearization methods for solving nonlinear equations \cite{LLX2022,KU2020}, we adopt a similar approach to solve the inverse problem in a nonlinearly coupled system. Although the classical higher-order linearization method is a valuable tool, it fails to consider the non-negativity of $m$. This necessitates exploring a new form that can regulate the input of $g$ on the boundary and ensure the non-negativity of $m$.  Afterward, we require a linear expansion of the MFG system around a pair of zero solutions $(0,0)$. It is worth noting that linearizing at the $(0,0)$ solution simplifies the problem by breaking down the coupled nonlinear equations into several uncoupled  linear or non-linear equations. Furthermore, linearization around $(0,0)$ eliminates the need to construct new complex geometrical optics (CGO) solutions. In fact, knowing only the CGO solution of a single linear equation is sufficient to achieve the unique identifiability of the inverse problem. For linear expansions at nonzero solutions, please refer to \cite{LZ2022}, where new CGO solutions of the coupled equations must be constructed to achieve the unique identifiability of the inverse problem.

The rest of the paper is organized as follows. In Section \ref{sec2}, we present the well-posedness of the forward problem. Section \ref{sec3} is devoted to the study of the approximation and complex geometrical optics solutions of the linear elliptic equation. The proofs of the unique determination for the MFG systems are provided in Section \ref{sec4}.
\section{Well-posedness of the forward problem}\label{sec2}

The objective of this section is to demonstrate the well-posedness of the Dirichlet problem for a class of coupled elliptic equations with small boundary data. We begin by defining the H\"{o}lder space $C^{K+\alpha}(\overline{\Omega})$, which is defined as the subspace of $C^K(\overline{\Omega})$ such that $\phi\in C^{K+\alpha}(\overline{\Omega})$ if and only if $D^l\phi$ exists and are H\"{o}lder continuous with exponent $\alpha$ for all $l=(l_1,l_2,...,l_n)\in\mathbb{N}^n$ with $|l|\leq K$. Here, $D^l:=\partial_{x_1}^{l_1}\partial_{x_2}^{l_2}...\partial_{x_n}^{l_n}$ for $x=(x_1,x_2,...,x_n)$, and the norm is defined as follows:

\begin{align*}
\| \phi\|_{C^{K+\alpha}(\overline{\Omega})}:=\sum_{\mid l\mid=K}\sup_{x,y\in \Omega, x\neq y}\frac{|\partial^{l}\phi(x)-\partial^{l}\phi(y)|}{|x-y|^{\alpha}}
+\sum_{|l|\leq K}\|D^l \phi\|_{L^\infty(\Omega)}<\infty.
\end{align*}

Before presenting the proof of the local well-posedness of the MFG system \eqref{um0}, it is necessary to verify that 0 is not a Dirichlet eigenvalue for either $-v\Delta+r(x)$ or $-v\Delta+k(x)$.

\begin{thm}
Suppose that $F\in\mathcal{A}$, then there exist $\delta>0, C>0$ such that for any $f, g\in B_{\delta}(\partial \Omega):=\{ \check{f}\in C^{2+\alpha}(\partial\Omega): \|\breve{f}\|_{C^{2+\alpha}(\partial\Omega)}<\delta\}$, the system \eqref{um0} has a solution which satisfies
\begin{align*}
\|(u,m)\|_{C^{2+\alpha}(\overline{\Omega})\times C^{2+\alpha}(\overline{\Omega})}\leq C (\| f\|_{C^{2+\alpha}(\partial\Omega)}+\| g\|_{C^{2+\alpha}(\partial\Omega)}).
\end{align*}\\
 Furthermore, the solution $(u,m)$ is unique within the class
\begin{align*}
\{(u,m)\in{C^{2+\alpha}(\overline{\Omega})\times C^{2+\alpha}(\overline{\Omega})}:\ \| (u,m)\|_{C^{2+\alpha}(\overline{\Omega})\times C^{2+\alpha}(\overline{\Omega})}\leq C\delta\}.
\end{align*}

\end{thm}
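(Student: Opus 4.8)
The plan is to recast the boundary value problem as the search for a zero of a single nonlinear map between H\"older spaces and to invoke the analytic implicit function theorem at the trivial solution, following the strategy announced in Section \ref{sub1.3}. Concretely, set $X=C^{2+\alpha}(\overline{\Omega})\times C^{2+\alpha}(\overline{\Omega})$, $Y=C^{\alpha}(\overline{\Omega})\times C^{\alpha}(\overline{\Omega})$ and $Z=C^{2+\alpha}(\partial\Omega)\times C^{2+\alpha}(\partial\Omega)$, all complexified since $F$ takes values in the complex admissible class $\mathcal{A}$, and define
\begin{align*}
\mathcal{G}:X\times Z\rightarrow Y\times Z,\qquad
\mathcal{G}(u,m,f,g)=\begin{pmatrix}
-v\Delta u+\tfrac12|\nabla u|^2+ku-F(x,m)\\
-v\Delta m-\mathrm{div}(m\nabla u)+rm\\
u|_{\partial\Omega}-f\\
m|_{\partial\Omega}-g
\end{pmatrix}.
\end{align*}
Because $F(x,0)=0$ by Definition \ref{adm}(ii), the pair $(u,m)=(0,0)$ solves the system with data $(f,g)=(0,0)$, i.e. $\mathcal{G}(0,0,0,0)=0$; the theorem then follows once we solve $\mathcal{G}=0$ for $(u,m)$ in terms of $(f,g)$ in a neighborhood of the origin.

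The decisive computation is the Fr\'echet derivative of $\mathcal{G}$ in $(u,m)$ at the origin. When linearizing the first equation, the quadratic term $\tfrac12|\nabla u|^2$ contributes $\nabla u\cdot\nabla(\cdot)$, which vanishes at $u=0$, and $-F(x,m)$ contributes $-\partial_z F(x,0)(\cdot)$, which vanishes by the admissibility condition $\partial_z F(x,0)=0$; in the second equation the coupling term $-\mathrm{div}(m\nabla u)$ is quadratic in $(u,m)$ and so drops out at the origin. Hence $D_{(u,m)}\mathcal{G}(0,0,0,0)$ acts diagonally,
\begin{align*}
D_{(u,m)}\mathcal{G}(0,0,0,0)(\delta u,\delta m)=\bigl(-v\Delta\delta u+k\,\delta u,\ -v\Delta\delta m+r\,\delta m,\ \delta u|_{\partial\Omega},\ \delta m|_{\partial\Omega}\bigr),
\end{align*}
decoupling into the two linear Dirichlet operators $-v\Delta+k$ and $-v\Delta+r$. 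Since $k,r\ge 0$, zero is not a Dirichlet eigenvalue of either operator (as recorded just above the statement), so standard Schauder theory shows that each map $w\mapsto(-v\Delta w+kw,\,w|_{\partial\Omega})$ is a linear isomorphism $C^{2+\alpha}(\overline{\Omega})\rightarrow C^{\alpha}(\overline{\Omega})\times C^{2+\alpha}(\partial\Omega)$; consequently $D_{(u,m)}\mathcal{G}(0,0,0,0)$ is a Banach-space isomorphism from $X$ onto $Y\times Z$.

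It remains to check that $\mathcal{G}$ is well defined and analytic near the origin, after which the analytic implicit function theorem yields an analytic solution map $(f,g)\mapsto(u,m)$ on a ball of radius $\delta$ with $(u,m)|_{(0,0)}=(0,0)$. Well-definedness reduces to verifying that the nonlinear terms land in $C^{\alpha}(\overline{\Omega})$: the products $|\nabla u|^2$ and $m\nabla u$ remain in $C^{1+\alpha}(\overline{\Omega})$ by the Banach-algebra property of H\"older spaces, so their derivatives lie in $C^{\alpha}(\overline{\Omega})$, while the Nemytskii operator $m\mapsto F(\cdot,m)$ is holomorphic from $C^{2+\alpha}(\overline{\Omega})$ into $C^{\alpha}(\overline{\Omega})$ precisely because $z\mapsto F(\cdot,z)$ is holomorphic with values in $C^{\alpha}(\overline{\Omega})$ by Definition \ref{adm}(i). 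Differentiability of the solution map at $(0,0)$ then gives the bound $\|(u,m)\|_{C^{2+\alpha}(\overline{\Omega})\times C^{2+\alpha}(\overline{\Omega})}\le C(\|f\|_{C^{2+\alpha}(\partial\Omega)}+\|g\|_{C^{2+\alpha}(\partial\Omega)})$ after shrinking $\delta$, and the local-uniqueness clause of the implicit function theorem furnishes uniqueness in the stated ball $\{\|(u,m)\|\le C\delta\}$.

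The main obstacle is not the abstract mechanism but the regularity bookkeeping needed to legitimize it: one must confirm that $\mathcal{G}$ maps into $Y\times Z$ with the claimed loss of exactly two derivatives and that it is Fr\'echet differentiable, indeed analytic, as a map of H\"older spaces, which is exactly where the holomorphy of $F$ and the algebra structure of $C^{K+\alpha}(\overline{\Omega})$ are essential. A secondary point requiring care is the passage from the complexified formulation back to real-valued $(u,m)$: because the data and coefficients are real and the constructed solution is locally unique, it is automatically real, but this should be noted explicitly.
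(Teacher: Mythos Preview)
Your proposal is correct and follows essentially the same route as the paper: both apply the (analytic/holomorphic) implicit function theorem to the nonlinear map encoding the system and its boundary traces, observe that $\mathcal{G}(0,0,0,0)=0$ thanks to $F(x,0)=0$, and compute the linearization at the origin to be the decoupled pair of Dirichlet problems $(-v\Delta+k,\,-v\Delta+r)$, which are isomorphisms since $0$ is not a Dirichlet eigenvalue. The paper spells out the Cauchy-estimate argument showing $m\mapsto F(\cdot,m)$ lands in $C^{\alpha}(\overline{\Omega})$ and is holomorphic (your ``regularity bookkeeping'' obstacle), and it derives the Lipschitz estimate from Lipschitz continuity of the solution map rather than from differentiability, but these are cosmetic differences in presentation, not in strategy.
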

\begin{proof}
Let
\begin{align*}
Y_1:&=\{ (f,g)\in C^{2+\alpha}(\partial\Omega)\times C^{2+\alpha}(\partial\Omega) \},\\
Y_2:&=\{ (u,m)\in C^{2+\alpha}(\overline{\Omega})\times C^{2+\alpha}(\overline{\Omega})\},\\
Y_3:&=Y_1\times C^{\alpha}(\overline{\Omega})\times C^{\alpha}(\overline{\Omega})
\end{align*}
and we consider the following mapping
$$\mathfrak{F}:Y_1\times Y_2\rightarrow Y_3  $$ for any $(\tilde{f},\tilde{g},\tilde{u},\tilde{m})\in Y_1\times Y_2$, we have
\begin{align*}
\mathfrak{F}(\tilde{f},\tilde{g},\tilde{u},\tilde{m})(x):=&\big{(}\tilde{u}|_{\partial\Omega}-\tilde{f}, \tilde{m}|_{\partial\Omega}-\tilde{g}, -v\Delta \tilde{u}+\frac{1}{2}|\nabla \tilde{u}|^2+k(x)\tilde{u}-F(x,\tilde{m}),\\ &-v\Delta \tilde{m}-\mathrm{div}(\tilde{m}\nabla \tilde{u})+r(x)\tilde{m}\big{)}.
\end{align*}
We claim that $\mathfrak{F}$ is well-defined. Since the H\"{o}lder space is an algebra under point-wise multiplication, we find $|\nabla \tilde{u}|^2\in C^{\alpha}(\overline{\Omega})$ and $\mathrm{div}(\tilde{m}\nabla \tilde{u})\in C^{\alpha}(\overline{\Omega})$. We only have to prove that $F(x,\tilde{m})\in C^{\alpha}(\overline{\Omega})$.  By the Cauchy's estimates, we deduce that
\begin{align}\label{Fk}
\|F^{(i)}\|_{C^\alpha(\overline{\Omega})}\leq\frac{i!}{R^i}\sup_{|z|=R}\|F(\cdot,z)\|_{C^{\alpha}(\overline{\Omega})}\ \ \mathrm{for}\  R>0.
\end{align}
Thus, for all $i\in\mathbb{{N}}$, there exists a constant $L$ such that
\begin{align}\label{Fk|}
\bigg{\|}\frac{F^{(i)}}{i!}z^i\bigg{\|}_{C^{\alpha}(\overline{\Omega})}\leq\frac{L^i}{R^i}\| z\|^i_{C^\alpha(\overline{\Omega})}\sup_{|z|=R}\|F(\cdot,z)\|_{C^{\alpha}(\overline{\Omega})}.
\end{align}
For choosing $R=2L\|z\|_{C^{\alpha}(\overline{\Omega})}$, we find the series $\sum_{k=2}^\infty F^{(i)}\frac{z^i}{i!}$ converges in $C^{\alpha}(\overline{\Omega})$ and therefore $F(x,\tilde{m})\in C^\alpha(\overline{\Omega})$. This indicates that the claim is valid.

Next, we prove that $\mathfrak{F}$ is holomorphic. Since $\mathfrak{F}$ is locally bounded, it suffices to prove that it is weak holomorphy. Let $(f_0,g_0,u_0, m_0)$ , $(f_1,g_1,u_1, m_1)\in Y_1\times Y_2$, we show that the following mapping
\begin{align*}
\beta\in\mathbb{C}\mapsto \mathfrak{F}((f_0,g_0,u_0, m_0)+\beta(f_1,g_1,u_1, m_1))
\end{align*}
is holomorphic in $\mathbb{C}$ with values in $Y_3$. It is evident that we only have to check that the map $\beta\rightarrow F(x, m_0+\beta m_1)$ is holomorphic in $\mathbb{C}$ with values in $C^{\alpha}(\overline{\Omega})$. We derive this fact due to the convergence of the series
\begin{align*}
\sum_{i=2}^\infty\frac{F^{(i)}}{i!}( m_0+\beta m_1)^i
\end{align*}
in $C^{\alpha}(\overline{\Omega})$, locally uniformly in $\beta\in\mathbb{C}$.

Note that $\mathfrak{F}(0,0,0,0)=0$ and the differential $\partial_{\tilde{u},\tilde{m}}\mathfrak{F}(0,0,0,0)$ is given by
$$\partial_{\tilde{u},\tilde{m}}\mathfrak{F}(0,0,0,0)(u,m)=(u|_{\partial \Omega}, m|_{\partial \Omega}, -v\Delta u+k(x)u , -v\Delta m+r(x)m).$$
As 0 is not a Dirichlet eigenvalue of $-v\Delta+r(x)$ nor of $-v\Delta+k(x)$, we can easily deduce that
 $\partial_{\tilde{u},\tilde{m}}\mathfrak{{F}}(0,0,0,0)$ is a linear isomorphism between $Y_2$ and $Y_3$. According to the implicit function theorem, we obtain that there exists a unique holomorphic function $S:B_\delta(\partial\Omega)\times B_\delta(\partial\Omega)\rightarrow Y_2$ such that $\mathfrak{{F}}(f,g,S(f,g))=0$ for all $f,g\in B_\delta(\partial\Omega)$.  Let $(u,m)=S(f,g)$, we obtain the unique solution of the MFG \eqref{um0}. Setting $(0,0)=S(f_0,g_0)$, since $S$ is Lipschitz, we drive that there exist constants $C>0$ such that
\begin{align*}
\parallel(u,m)\parallel_{C^{2+\alpha}(\overline{\Omega})\times C^{2+\alpha}(\overline{\Omega}) }
\leq C(\parallel f\parallel_{C^{2+\alpha}(\partial\Omega)}+\parallel g\parallel_{C^{2+\alpha}(\partial\Omega)}).
\end{align*}
This proof is complete.\\
\end{proof}

\section{Runge approximation and complex geometrical optics solutions}\label{sec3}

In this section, we establish the Runge approximation property for the linear elliptic equation and the complex geometrical optics (CGO) solutions with a focus on special boundary conditions. These properties, together with the construction of CGO solutions, will be used to prove Theorems \ref{alldata} and \ref{pdata}.
\subsection{Runge approximation}

\begin{lem}\label{fdata}
Suppose that $k(x)\in C^{\alpha}(\overline{\Omega})$. Then for any solution $V\in H^1(\Omega)$ to
$-v\Delta V+k(x)V=0\ \mathrm{in}\ \Omega$, there exist solution $U\in C^{2+\alpha}(\overline{\Omega})$ to
$ -v\Delta U+k(x)U=0\ \mathrm{in}\ \Omega,$
such that  for $\eta>0$
\begin{align*}
\parallel V-U\parallel_{L^2(\Omega)}<\eta.
\end{align*}
\end{lem}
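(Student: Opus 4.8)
The plan is to read this as a duality statement and prove it by the Hahn--Banach theorem, which is the canonical route for Runge-type approximation. Set $\mathcal{S}:=\{U\in C^{2+\alpha}(\overline{\Omega}): -v\Delta U+k(x)U=0 \text{ in } \Omega\}$, regarded as a subspace of $L^2(\Omega)$, and let $\mathcal{N}$ denote the space of all $H^1(\Omega)$ solutions. The assertion is that $\mathcal{N}\subset\overline{\mathcal{S}}^{\,L^2}$. By Hahn--Banach it suffices to prove the annihilator implication: if $\phi\in L^2(\Omega)$ satisfies $\int_\Omega U\phi\,dx=0$ for every $U\in\mathcal{S}$, then $\int_\Omega V\phi\,dx=0$ for the prescribed $V\in\mathcal{N}$.

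The key device is an adjoint corrector. Since $L=-v\Delta+k$ is formally self-adjoint and $0$ is not a Dirichlet eigenvalue (verified earlier in the paper), the problem $-v\Delta w+k(x)w=\phi$ in $\Omega$, $w=0$ on $\partial\Omega$, has a unique solution, and elliptic regularity upgrades it to $w\in H^2(\Omega)$ (here $\phi\in L^2$, $k\in C^\alpha\subset L^\infty$, and $\partial\Omega$ is smooth). For any $U\in\mathcal{S}$, Green's identity gives
\begin{align*}
0=\int_\Omega U\phi\,dx &= \int_\Omega U(-v\Delta w+kw)\,dx \\
&= \int_\Omega (-v\Delta U+kU)\,w\,dx - v\int_{\partial\Omega}(U\partial_\nu w - w\partial_\nu U)\,dS \\
&= -v\int_{\partial\Omega}U\,\partial_\nu w\,dS,
\end{align*}
where I used $-v\Delta U+kU=0$ and $w|_{\partial\Omega}=0$. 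By the well-posedness of the forward Dirichlet problem together with Schauder estimates, every $h\in C^{2+\alpha}(\partial\Omega)$ is the trace of some $U\in\mathcal{S}$; since $C^{2+\alpha}(\partial\Omega)$ is dense in $L^2(\partial\Omega)$ and $\partial_\nu w\in H^{1/2}(\partial\Omega)\subset L^2(\partial\Omega)$, this forces $\partial_\nu w=0$ on $\partial\Omega$.

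With $w\in H^2(\Omega)$ now satisfying \emph{both} $w=0$ and $\partial_\nu w=0$ on $\partial\Omega$, I would test the weak formulation of $-v\Delta V+kV=0$ against $w\in H^1_0(\Omega)$:
\begin{align*}
\int_\Omega V\phi\,dx &= \int_\Omega V(-v\Delta w+kw)\,dx \\
&= \int_\Omega (v\nabla V\cdot\nabla w + kVw)\,dx - v\int_{\partial\Omega}V\,\partial_\nu w\,dS = 0,
\end{align*}
the bulk integral vanishing because $w$ is an admissible test function and the boundary integral because $\partial_\nu w=0$. This completes the duality argument and yields the desired $U\in\mathcal{S}$ with $\|V-U\|_{L^2(\Omega)}<\eta$. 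I expect the main obstacle to be exactly this last integration by parts: because $V$ is only $H^1$, its normal derivative need not exist classically, so one cannot apply Green's second identity to the pair $(V,w)$ naively. The resolution is to lean entirely on the $H^2$-regularity of the corrector $w$ and on the two boundary conditions $w=\partial_\nu w=0$, so that everything reduces to the weak formulation tested against $w\in H^1_0(\Omega)$; securing $\partial_\nu w=0$ (not merely $w=0$) is precisely what the annihilation hypothesis buys and is the crux of the proof.

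As a remark, there is also a more elementary route that bypasses duality: take the $H^{1/2}(\partial\Omega)$ trace of $V$, approximate it in $H^{1/2}$ by data $f_j\in C^{2+\alpha}(\partial\Omega)$, and let $U_j\in C^{2+\alpha}(\overline{\Omega})$ solve the Dirichlet problem with datum $f_j$; continuous dependence gives $\|U_j-V\|_{H^1(\Omega)}\le C\|f_j-V|_{\partial\Omega}\|_{H^{1/2}(\partial\Omega)}\to 0$, hence convergence in $L^2$. This direct argument in fact yields the stronger $H^1$-approximation, but the Hahn--Banach formulation above is the one that transfers to the partial-boundary setting (where one must restrict to solutions vanishing on part of $\partial\Omega$ and invoke unique continuation), so I would develop that version as the principal proof.
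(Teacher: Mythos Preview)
Your proof is correct and follows essentially the same Hahn--Banach duality route as the paper: introduce the adjoint corrector $w$ solving $(-v\Delta+k)w=\phi$ with zero Dirichlet data, use the annihilation hypothesis on $C^{2+\alpha}$ solutions to force $\partial_\nu w=0$, and conclude by pairing $V$ with $\phi$ through $w$. You are in fact more careful than the paper about the final step, correctly noting that since $V$ is only $H^1$ one should view the vanishing of the bulk integral as the weak formulation tested against $w\in H^1_0(\Omega)$ rather than a naive application of Green's second identity.
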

\begin{proof}
Define
 $$X=\{ U\in C^{2+\alpha}(\overline{\Omega})|\ U\ is\ a\ solution\ to\ -v\Delta U+k(x)U=0\ \mathrm{in}\ \Omega\},$$
and $$Y=\{ V\in H^{1}({\Omega})|\ V\ is\ a \ solution\ to\ -v\Delta V+k(x)V=0\ \mathrm{in}\ \Omega\}.$$
Our goal is to prove the density of $X$ in $Y$. Utilizing the Hahn-Banach theorem, we can simplify this task to verifying the following statement: if $y\in L^2(\Omega)$ satisfies
$$\int_\Omega yUdx=0 \ \ for\ any\ U\in X,$$
then
$$\int_\Omega yVdx=0 \ \ for\ any\ V\in Y.$$
Let $y \in L^2(\Omega)$ and  and satisfies $\int_\Omega yUdx=0$ for any $U\in X$. Then, we consider the following equation
\begin{align*}
\begin{cases}
\displaystyle -v\Delta \overline{U}+k(x)\overline{U}=y\ &\mathrm{in}\ \Omega,\\
\displaystyle \overline{U}=0\ &\mathrm{on}\ \partial\Omega,
\end{cases}
\end{align*}
and the solution  $\overline{U} \in H^2(\Omega)$. For any $U\in X$, we find
$$0=\int_{\Omega}yUdx=\int_{\Omega}(-v\Delta \overline{U}+k(x)\overline{U})Udx=-\int_{\partial\Omega}
v\partial_\nu \overline{U}UdS.$$
Given that $U|_{\partial\Omega}$ can be any arbitrary function with compact support on $\partial\Omega$, it is necessary for $\partial_\nu \overline{U}$ to be equal to zero. Thus, for any $V\in Y$,
$$\int_{\Omega}yVdx=\int_{\Omega}(-v\Delta \overline{U}+k(x)\overline{U})Vdx=-\int_{\partial\Omega}
v\partial_\nu \overline{U}VdS=0,$$
which verifies the assertion.\\
\end{proof}

\begin{lem}\label{prung}
Let $k\in C^{\alpha}(\overline{\Omega})$. Then for any solution $W\in H^1(\Omega)$ to the equation $-\nu\Delta W+k(x)W=0$, there exist a solution $U\in C^{2+\alpha}(\overline{\Omega})$ to
\begin{align}\label{p2U}
\begin{cases}
\displaystyle\  -v\Delta {U}+k(x){U}=0\ &\mathrm{in}\ \Omega,\\
\displaystyle\ {U}=0\ &\mathrm{on}\ \Gamma
\end{cases}
\end{align}
with  $\Gamma$ is a open subset of $\partial\Omega$, for any $\eta>0$  such that
$$\parallel W-U\parallel_{L^2(\tilde{\Omega})}<\eta,$$
where $\tilde{\Omega}=\Omega\setminus\Omega'$ and the sub-domain $\Omega'\subseteq \Omega$ satisfying $\partial\Omega\subseteq\partial \Omega'$.
\begin{proof}
Let $$ X'=\{ U\in C^{2+\alpha}(\overline{\Omega})|\ U\ is\ a\ soliution\ to \ \eqref{p2U}\},$$
and
$$Y'=\{ W\in  H^1({\Omega})| W\ is\ a\ solution\ to -\nu\Delta W+k(x)W=0\}.$$
Our objective is to demonstrate the density of $X'$ in $Y'$. Applying the Hahn-Banach theorem again, it is enough to establish that if $y\in L^2(\tilde{\Omega})$ satisfies
$$\int_{\tilde{\Omega}}yUdx=0,\ for\ any\ U\in X',$$
then
$$\int_{\tilde{\Omega}}yWdx=0,\ for\ any\ W\in Y'.$$
We extend $y$ to $\Omega$ by letting $y=0$ outside $\tilde{\Omega}$.
Consider
\begin{align}\label{U2}
\begin{cases}
\displaystyle\  -v\Delta {\overline{U}}+k(x){\overline{U}}=y\ \ \ &\mathrm{in}\ \Omega,\\
\displaystyle\ {\overline{U}}=0\ &\mathrm{on}\ \partial\Omega,
\end{cases}
\end{align}
thus for any $U\in X'$, we have
$$0=\int_{\Omega}yUdx=\int_{\Omega}(-v\Delta {\overline{U}}+k(x){\overline{U}})Udx=-\int_{\partial\Omega}v\partial_\nu\overline{U}UdS
=-\int_{\partial\Omega\setminus \Gamma}v\partial_\nu\overline{U}UdS.$$
Since $U|_{\partial\Omega\setminus \Gamma}$ can be arbitrary function, then $\partial_\nu \overline{U}=0$ on $\partial\Omega\setminus \Gamma$.  Notice that
$$-v\Delta\overline{U}+k(x)\overline{U}=0\ \ \mathrm{in}\ \Omega'.$$
Due to $\Omega'$ is open and connected, according to the unique continuation principle for linear elliptic equations, we can conclude that $\overline{U}=0$ on $\Omega'$.
%Since $\Omega'$ is open and connected, by unique continuation for the linear elliptic equation, we have $\overline{U}=0$ on $\Omega'$.
Hence, $\overline{U}|_{\partial\Omega'}=\partial_\nu \overline{U}|_{\partial\Omega'}=0$, and it follows that
$$\overline{U}|_{\partial\tilde{\Omega}}=\partial_\nu \overline{U}|_{\partial\tilde{\Omega}}=0.$$
Thus, for any $W\in Y'$, we have
$$\int_{\tilde{\Omega}}yWdx=\int_{\tilde{\Omega}}(-v\Delta {\overline{U}}+k(x){\overline{U}})Wdx=-\int_{\partial\tilde{\Omega}}
v\partial_{\nu}\overline{U}WdS=0.$$
This proof is complete.\\
\end{proof}
\end{lem}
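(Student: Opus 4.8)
The plan is to adapt the duality argument from Lemma \ref{fdata} to the partial-boundary setting, the essential new ingredient being unique continuation into the interior subdomain $\Omega'$. First I would set up the two solution spaces $X'$ and $Y'$ as in the statement and reduce the density of $X'$ in $Y'$ (in the $L^2(\tilde{\Omega})$ topology) to a Hahn--Banach annihilator statement: it suffices to show that any $y\in L^2(\tilde{\Omega})$ with $\int_{\tilde{\Omega}} yU\,dx=0$ for all $U\in X'$ must also annihilate every $W\in Y'$. Extending $y$ by zero to all of $\Omega$, I would solve the auxiliary Dirichlet problem $-v\Delta\overline{U}+k(x)\overline{U}=y$ in $\Omega$ with $\overline{U}=0$ on $\partial\Omega$; this is solvable precisely because $0$ is not a Dirichlet eigenvalue of $-v\Delta+k(x)$, the same hypothesis invoked in the well-posedness argument.

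Next I would integrate by parts. For $U\in X'$, using $\overline{U}|_{\partial\Omega}=0$ and $U|_\Gamma=0$, the identity $0=\int_\Omega yU\,dx=\int_\Omega(-v\Delta\overline{U}+k\overline{U})U\,dx$ collapses to the boundary integral $-\int_{\partial\Omega\setminus\Gamma} v\,\partial_\nu\overline{U}\,U\,dS=0$. Since the Dirichlet trace of $U$ on $\partial\Omega\setminus\Gamma$ may be prescribed arbitrarily (while keeping $U=0$ on $\Gamma$), this forces $\partial_\nu\overline{U}=0$ on $\partial\Omega\setminus\Gamma$. Combined with $\overline{U}=0$ on $\partial\Omega$, I now have vanishing Cauchy data for $\overline{U}$ on the open portion $\partial\Omega\setminus\Gamma$.

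The decisive step is to propagate this vanishing into $\Omega'$. Because $y$ is supported in $\tilde{\Omega}$, the function $\overline{U}$ solves the homogeneous equation $-v\Delta\overline{U}+k(x)\overline{U}=0$ in $\Omega'$, and $\partial\Omega\setminus\Gamma$ is a nonempty open subset of $\partial\Omega'$ on which both $\overline{U}$ and $\partial_\nu\overline{U}$ vanish. Invoking the unique continuation principle on the connected open set $\Omega'$ yields $\overline{U}\equiv 0$ in $\Omega'$, hence $\overline{U}=\partial_\nu\overline{U}=0$ on $\partial\Omega'$, and therefore on $\partial\tilde{\Omega}$. A final integration by parts over $\tilde{\Omega}$ gives $\int_{\tilde{\Omega}} yW\,dx=-\int_{\partial\tilde{\Omega}} v\,\partial_\nu\overline{U}\,W\,dS=0$ for every $W\in Y'$, completing the annihilator argument and hence the density claim.

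I expect the main obstacle to be the transition from the boundary orthogonality to vanishing Cauchy data on $\partial\Omega\setminus\Gamma$ and its careful coupling with unique continuation: one must ensure that $\partial\Omega\setminus\Gamma$ is a genuine open patch of $\partial\Omega'$ and that $\Omega'$ is connected, so that the Cauchy data are overdetermined enough for unique continuation to force $\overline{U}$ to vanish throughout $\Omega'$. Verifying the arbitrariness of the Dirichlet trace on $\partial\Omega\setminus\Gamma$ (solvability with zero data on $\Gamma$) and the regularity needed to render all boundary integrals meaningful are the points requiring the most care.
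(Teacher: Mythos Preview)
Your proposal is correct and follows essentially the same argument as the paper's proof: Hahn--Banach reduction, the auxiliary Dirichlet problem for $\overline{U}$, integration by parts to force $\partial_\nu\overline{U}=0$ on $\partial\Omega\setminus\Gamma$, unique continuation in $\Omega'$ to obtain $\overline{U}\equiv 0$ there, and a final integration by parts over $\tilde{\Omega}$. If anything, your outline is more explicit than the paper's about the hypotheses needed for the unique continuation step (connectedness of $\Omega'$, that $\partial\Omega\setminus\Gamma$ sits in $\partial\Omega'$, and regularity for the boundary integrals).
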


\begin{rem}
Lemma \ref{prung} defines the domain ${\Omega'}$ as ${\Omega'}=\Omega\setminus \tilde{\Omega}$, where $\tilde{\Omega}$ is a subset of $\Omega$, and $\Omega'$ is a connected region. Moreover, we can select $\tilde{\Omega}$ to be as large as possible, resulting in $\Omega'$ being a narrow domain. This means that for subsequent uniqueness identification of unknown coefficients using partial boundary data, it is only necessary to have prior knowledge of the coefficients near the partial boundary.

\end{rem}

\subsection{Construction of CGO solutions}

We first introduce the subset of the boundary
\begin{align*}
\partial\Omega_{+,\varepsilon_0}(\lambda)&=\{ x\in\partial \Omega;\lambda\cdot\nu(x)>\varepsilon_0\},\\
\partial\Omega_{-,\varepsilon_0}(\lambda)&=\{ x\in\partial \Omega;\lambda\cdot\nu(x)<\varepsilon_0\},
\end{align*}
where $\lambda\in \mathbb{S}^{n-1}$ and $\varepsilon_0$ is a small constant. Setting $\varphi=\lambda\cdot x$ for $x\in\Omega$, we consider the weighted Hilbert space $L^2(\Omega; e^{\varphi/h})$ and $L^2(\Omega;e^{-\varphi/h})$ with the scalar products
\begin{align*}%\label{weight1}
\langle \tilde{u},\hat{u} \rangle_{e^{\varphi/h}}&=\int_{\Omega}\tilde{u}(x) \overline{\hat{u}(x)}e^{2\varphi/h}dx,\\
%\label{weight2}
\langle \tilde{u},\hat{u} \rangle_{e^{-\varphi/h}}&=\int_{\Omega}\tilde{u}(x) \overline{\hat{u}(x)}e^{-2\varphi/h}dx,
\end{align*}
respectively. We now introduce the following auxiliary Lemma \cite{BU2002,S2008}, which plays a crucial role in the construction of CGO solutions.
\begin{lem}\label{Car-est}
(Carleman estimate with boundary terms) Let $k\in C^\alpha(\overline{\Omega})$, and $\varphi(x)=\lambda\cdot x$ with $\lambda\in \mathbb{S}^{n-1}$. There exist constants $C>0$ and $h_0>0$ such that whenever $0<h\leq h_0$, we have
\begin{equation*}
\begin{aligned}%\label{car-es}
&h\int_{\partial\Omega_{+}(\lambda)}(\lambda\cdot\nu ) e^{-2\varphi/h}|\partial_\nu V|^2 dS+\int_{\Omega}e^{-2\varphi/h}| V|^2dx\\
\leq& Ch^2\int_{\Omega}e^{-2\varphi/h}|(-\Delta+k)V|^2dx
-Ch\int_{\partial\Omega_{-}(\lambda)}(\lambda\cdot\nu ) e^{-2\varphi/h}|\partial_\nu V|^2 dS
\end{aligned}
\end{equation*}
for any $V\in C^2(\overline{\Omega})$ with $V|_{\partial\Omega}=0$.
\end{lem}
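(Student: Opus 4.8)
The plan is to prove the estimate by conjugating out the Carleman weight, reducing everything to a coercivity estimate for the resulting semiclassical operator acting on a function vanishing on $\partial\Omega$. The decisive feature to contend with is that the weight $\varphi=\lambda\cdot x$ is \emph{limiting}: its volume commutator vanishes, so the usual positive-commutator mechanism produces nothing, and positivity must be supplied from elsewhere.

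First I would set $w=e^{-\varphi/h}V$, so that $w\in C^2(\overline{\Omega})$ with $w|_{\partial\Omega}=0$, and compute the conjugated operator. Since $\varphi=\lambda\cdot x$ satisfies $|\nabla\varphi|=1$ and $\Delta\varphi=0$, one gets
\[
P_\varphi w:=e^{-\varphi/h}(-h^2\Delta)(e^{\varphi/h}w)=-h^2\Delta w-2h\,\lambda\cdot\nabla w-w ,
\]
so $e^{-\varphi/h}(-\Delta V)=h^{-2}P_\varphi w$. Using $w|_{\partial\Omega}=0$, hence $\nabla w=(\partial_\nu w)\nu$ on $\partial\Omega$, every quantity in the lemma becomes a quantity in $w$: $\int_\Omega e^{-2\varphi/h}|V|^2=\|w\|_{L^2(\Omega)}^2$, $e^{-2\varphi/h}|\partial_\nu V|^2=|\partial_\nu w|^2$ on $\partial\Omega$, and $h^2\int_\Omega e^{-2\varphi/h}|(-\Delta)V|^2=h^{-2}\|P_\varphi w\|^2$. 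Thus the whole estimate is equivalent to an inequality for $w$ alone.

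Next I would split $P_\varphi=A+B$ into its self-adjoint part $A=-h^2\Delta-1$ and skew-adjoint part $B=-2h\,\lambda\cdot\nabla$ (skew-adjointness holding precisely because $w|_{\partial\Omega}=0$), and expand
\[
\|P_\varphi w\|^2=\|Aw\|^2+\|Bw\|^2+2\,\mathrm{Re}\langle Aw,Bw\rangle .
\]
The cross term is evaluated by integration by parts: with $\lambda$ constant one has $2\,\mathrm{Re}\int_\Omega\Delta w\,\overline{\lambda\cdot\nabla w}=\int_{\partial\Omega}(\lambda\cdot\nu)|\partial_\nu w|^2\,dS$ after using $\nabla w=(\partial_\nu w)\nu$ on the boundary, while the zeroth-order piece contributes $\int_{\partial\Omega}(\lambda\cdot\nu)|w|^2\,dS=0$. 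This yields the exact identity
\[
\|Aw\|^2+\|Bw\|^2=\|P_\varphi w\|^2-2h^3\int_{\partial\Omega}(\lambda\cdot\nu)\,|\partial_\nu w|^2\,dS .
\]
Here no interior term survives: the commutator $[A,B]$ is identically zero because $\Delta$ and $\lambda\cdot\nabla$ commute. This is exactly the main obstacle, and it is why the estimate controls only $\|w\|_{L^2}$ (no gradient gain).

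To manufacture the missing interior positivity I would invoke a \emph{directional Poincar\'e inequality}. Since $w$ vanishes on $\partial\Omega$ and $\Omega$ is bounded, integrating $w=\int \lambda\cdot\nabla w\,ds$ along each chord of $\Omega$ in the direction $\lambda$ gives $\|w\|_{L^2(\Omega)}\le \mathrm{diam}(\Omega)\,\|\lambda\cdot\nabla w\|_{L^2(\Omega)}$, that is $\|w\|^2\le \tfrac{C_\Omega^2}{4h^2}\|Bw\|^2\le \tfrac{C_\Omega^2}{4h^2}(\|Aw\|^2+\|Bw\|^2)$ with $C_\Omega=\mathrm{diam}(\Omega)$. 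Combining this with the identity above, then splitting $\int_{\partial\Omega}$ into the pieces where $\lambda\cdot\nu>0$ and $\lambda\cdot\nu<0$ and keeping the nonnegative $\partial\Omega_+(\lambda)$ contribution on the left, produces the free-potential estimate in $w$-form; the stated weight on the $\partial\Omega_+(\lambda)$ boundary term is recovered by additionally using $\|Aw\|^2+\|Bw\|^2\ge 0$ in the same identity. Finally I would reinstate $k$ as a lower-order perturbation: passing from $P_\varphi$ to $P_\varphi+h^2k$ changes the right-hand side by a term of size $\lesssim h^2\|k\|_{L^\infty}^2\|w\|^2$, which is absorbed into the left-hand side once $h_0$ is taken small, and undoing the substitution $w=e^{-\varphi/h}V$ returns the assertion. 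The crux of the argument is the replacement of the vanishing commutator of the limiting weight by the directional Poincar\'e inequality available on the bounded domain under the homogeneous boundary condition.
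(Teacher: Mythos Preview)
Your argument is correct and is precisely the standard Bukhgeim--Uhlmann derivation for the linear weight: conjugate by $e^{-\varphi/h}$, split the conjugated operator into its symmetric and antisymmetric parts, compute the cross term as a pure boundary integral (the interior commutator vanishing because $\varphi$ is linear), and recover interior positivity via the directional Poincar\'e inequality furnished by $w|_{\partial\Omega}=0$. The bookkeeping with the boundary coefficients and the absorption of the lower-order potential $k$ for $h$ small are handled as you indicate.

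The only thing to compare against is that the paper does \emph{not} give its own proof of this lemma: it is stated as an auxiliary result with a reference to Bukhgeim--Uhlmann and to Salo's lecture notes. Your write-up is a faithful reconstruction of exactly that argument, so there is no divergence in approach to discuss; you have simply supplied the details the paper outsources to the literature.
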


\begin{lem}\label{part-sol}Let $k(x)\in C^\alpha(\overline{\Omega})$, $\lambda, \eta\in \mathbb{S}^{n-1}$ and $\lambda\cdot\eta=0$. Assume that $\varphi=\lambda\cdot x, \psi=\eta\cdot x $ with $x\in\Omega$.

(i)\ For any function $a\in
H^2(\Omega)$ satisfying
$$(\lambda+\mathrm{i}\eta)\cdot\nabla a=0\ \ in\ \Omega,$$
 then the equation $-v\Delta u+k(x)u=0$ in $\Omega$ has a solution
 \begin{align*}%\label{u}
 u(x)=e^{\frac{1}{h}(\mathrm{i}\psi+\varphi)}(a+b_1),
 \end{align*}
where $b_1\in H^1(\Omega)$ satisfies
$$\lim_{h\rightarrow 0}\|b_1\|_{L^2(\Omega)}=0,\ \ 0<h<h_0$$
for some $h_0>0$.

(ii)\ We can find solutions to
 \begin{align*}
%\label{ume}
\begin{cases}
\displaystyle-v\Delta u+k(x)u=0
\ \ \hspace*{1.0cm} &\mathrm{in}\  \Omega,\medskip\\
\displaystyle  u=0,\hspace*{4.3cm} &\mathrm{on}\  \partial\Omega_{+,\varepsilon_0}(\lambda)
\end{cases}
\end{align*}
 of the form
\begin{align*}
u(x)=e^{-\frac{1}{h}(\mathrm{i}\psi+\varphi)}(1+b_2)
\end{align*}
and satisfies
 $$\lim_{h\rightarrow 0}\|b_2\|_{L^2(\Omega)}=0,\ \  0<h<h_0$$
  for some $h_0>0.$
\end{lem}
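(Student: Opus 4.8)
The plan is to construct both solutions via the complex geometrical optics (CGO) ansatz, reducing each to the solvability of a conjugated, transport-dominated equation for the remainder term. Throughout I write $\rho=\lambda+\mathrm{i}\eta\in\mathbb{C}^n$ and note that the orthogonality $\lambda\cdot\eta=0$ together with $\lambda,\eta\in\mathbb{S}^{n-1}$ yields the crucial identity $\rho\cdot\rho=|\lambda|^2-|\eta|^2+2\mathrm{i}\,\lambda\cdot\eta=0$, while $|\rho|=\sqrt2$. Setting $\zeta=\rho/h$, we then have $e^{\frac{1}{h}(\varphi+\mathrm{i}\psi)}=e^{\zeta\cdot x}$ with $\zeta\cdot\zeta=0$ and $|\zeta|=\sqrt2/h$, which is exactly the limiting Carleman weight structure needed.

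For part (i), I would substitute $u=e^{\zeta\cdot x}(a+b_1)$ into $-v\Delta u+ku=0$. Because $\zeta\cdot\zeta=0$, the conjugated operator is $e^{-\zeta\cdot x}(-v\Delta)e^{\zeta\cdot x}=-v(\Delta+2\zeta\cdot\nabla)$, and using the transport condition $(\lambda+\mathrm{i}\eta)\cdot\nabla a=0$, i.e. $\zeta\cdot\nabla a=0$, the equation reduces to
\[
(\Delta+2\zeta\cdot\nabla)\,b_1=\tfrac1v\,k\,(a+b_1)-\Delta a\quad\text{in }\Omega.
\]
The key analytic input is the standard Sylvester--Uhlmann type solvability estimate: for $\zeta\cdot\zeta=0$ and $|\zeta|$ large there is a solution operator $G_\zeta$ for $\Delta_\zeta:=\Delta+2\zeta\cdot\nabla$ on $L^2(\Omega)$ with $\|G_\zeta f\|_{L^2(\Omega)}\le C|\zeta|^{-1}\|f\|_{L^2(\Omega)}=Ch\|f\|_{L^2(\Omega)}$ (and a corresponding $H^1$ bound). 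Writing the equation as the fixed-point problem $(I-\tfrac1v G_\zeta k)\,b_1=G_\zeta(\tfrac1v k a-\Delta a)$ and observing that $\|\tfrac1v G_\zeta k\|_{L^2\to L^2}\le Ch\|k\|_{L^\infty}/v<1$ for $h$ small, a Neumann series produces $b_1\in H^1(\Omega)$ with $\|b_1\|_{L^2(\Omega)}\le Ch\big(\|ka\|_{L^2}+v\|\Delta a\|_{L^2}\big)\to0$, where $a\in H^2(\Omega)$ is used. This establishes (i).

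For part (ii), I would use the ansatz $u=e^{-\zeta\cdot x}(1+b_2)$; the analogous computation (now with $(-v\Delta+k)e^{-\zeta\cdot x}=k\,e^{-\zeta\cdot x}$) gives the conjugated equation $(-v\Delta+2v\zeta\cdot\nabla+k)\,b_2=-k$, while the partial condition $u|_{\partial\Omega_{+,\varepsilon_0}(\lambda)}=0$ becomes a vanishing constraint on the trace of $1+b_2$. The difference from (i) is that I can no longer invert a global solution operator: the vanishing on $\partial\Omega_{+,\varepsilon_0}(\lambda)$ must be enforced. The plan is to realize the solution through a duality argument built on the Carleman estimate with boundary terms of \Cref{Car-est}. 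Concretely, I would define the linear functional naturally associated with the conjugated equation on the subspace of test functions $V\in C^2(\overline\Omega)$ with $V|_{\partial\Omega}=0$, bound it using the right-hand side of the Carleman inequality (the $h^2$ volume term controls the source $-k$ after weighting by $e^{-\varphi/h}$, and the favorable-sign term over $\partial\Omega_-(\lambda)$ absorbs the residual boundary contribution), and then invoke the Hahn--Banach theorem together with the Riesz representation theorem to produce $b_2$. Since $|e^{-\zeta\cdot x}b_2|=e^{-\varphi/h}|b_2|$, the weighted estimate on the correction term directly controls $\|b_2\|_{L^2(\Omega)}$, and the one-power gain in $h$ from the Carleman estimate gives $\|b_2\|_{L^2(\Omega)}\le Ch\|k\|_{L^2(\Omega)}\to0$.

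The main obstacle is part (ii): arranging the duality so that the Carleman estimate simultaneously yields the quantitative smallness of $b_2$ and enforces homogeneous data on the thickened front face $\partial\Omega_{+,\varepsilon_0}(\lambda)$ rather than merely on $\partial\Omega_+(\lambda)$. This requires carefully tracking the sign of $\lambda\cdot\nu$ across $\partial\Omega$, choosing the weighted space $L^2(\Omega;e^{-2\varphi/h})$ to match the real part of the phase, and verifying that the normal-derivative boundary term $\int_{\partial\Omega_+(\lambda)}(\lambda\cdot\nu)e^{-2\varphi/h}|\partial_\nu V|^2\,dS$ controlled on the left-hand side translates, through the Riesz representation, into the required vanishing on $\partial\Omega_{+,\varepsilon_0}(\lambda)$; the small parameter $\varepsilon_0$ is precisely what keeps $\lambda\cdot\nu$ bounded away from zero on the relevant face so that the boundary terms do not degenerate. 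By contrast, part (i) and the interior solvability are routine once the Sylvester--Uhlmann estimate is in hand.
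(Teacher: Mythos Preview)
Your plan is correct and matches the paper's proof essentially step for step: for (i) the paper simply cites \cite{CFKKU2021}, which is precisely the Sylvester--Uhlmann solvability estimate plus Neumann-series argument you outline; for (ii) the paper carries out the Carleman--duality construction you describe, defining a linear functional on $(-v\Delta+k)\mathcal{D}$, bounding it via \Cref{Car-est}, and extending by Hahn--Banach to produce $\rho=e^{-\zeta\cdot x}b_2$ in the weighted space $L^2(\Omega;e^{\varphi/h})$. The only detail you leave implicit is that the test space must be restricted to $\mathcal{D}=\{V\in C^2(\overline\Omega):V|_{\partial\Omega}=0,\ \partial_\nu V|_{\partial\Omega_{-,\varepsilon_0}(\lambda)}=0\}$ and the functional must include a boundary pairing $\langle v\partial_\nu V,q\rangle_{\partial\Omega_{+,\varepsilon_0}(\lambda)}$; this is exactly what kills the unfavorable $\partial\Omega_-$ term in the Carleman estimate and, by duality, fixes the trace of $\rho$ on $\partial\Omega_{+,\varepsilon_0}(\lambda)$---the mechanism you correctly flag as the main obstacle.
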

\begin{proof}
(i)The proof is given in \cite{CFKKU2021}.

(ii) We first define
$$\mathcal{D}=\big{\{}V\in C^2(\overline{\Omega})\big{|} V|_{\zz}=0\ \mathrm{and}\ \frac{\partial V}{\partial\nu}\bigg{|}_{\partial\Omega_{-,\varepsilon_0}(\lambda)}=0\big{\}}.$$
For any $Q\in L^2(\Omega; e^{\varphi/h})$ and $q\in L^2(\zz; e^{\varphi/h})$, and
$k\in C^\alpha(\overline{\Omega})$, we define a linear functional $\mathcal{L}$ on the linear subspace $\mathcal{L}=(-v\Delta+k)\mathcal{D}$ as follows:
$$\mathcal{L}((-v\Delta+k)V)=\langle V, Q\rangle-\langle v\partial_\nu V, q\rangle_{\partial\Omega_{+,\varepsilon_0}(\lambda)}.$$
From the Lemma \ref{Car-est}, we have
\begin{equation*}
%\label{qq}
 \begin{aligned}
 |\mathcal{L}((-v\Delta+k)V)|&\leq|\langle V, Q\rangle|+|\langle v\partial_\nu V, q\rangle_{\partial\Omega_{+,\varepsilon_0}(\lambda)}|\\
 &\leq\|V\|_{L^2(\Omega; e^{-\varphi/h})}\|Q\|_{L^2(\Omega; e^{\varphi/h})}+\|v\partial_\nu V\|_{L^2(\partial\Omega_{+,\varepsilon_0}(\lambda);e^{-\varphi/h})}
 \|q\|_{L^2(\partial\Omega_{+,\varepsilon_0}(\lambda);e^{\varphi/h})}\\
 &\leq C(h^2\|Q\|_{L^2(\Omega; e^{\varphi/h})}+h\|q\|_{L^2(\partial\Omega_{+,\varepsilon_0}(\lambda);e^{\varphi/h}}))
 \|(-\Delta+k)V\|_{L^2(\Omega; e^{-\varphi/h})}^2,
 \end{aligned}
\end{equation*}
we conclude that $\mathcal{L}$ is bounded, and by Hahn-Banach theorem, it has an extension $\mathcal{L}$ to a linear functional in $L^2(\Omega; e^{-\varphi/h})$ with the same norm. Therefore there exists $\rho\in L^2(\Omega; e^{\varphi/h})$ such that
 \begin{align}\label{fv}
\langle(-v\Delta+k)V,\rho\rangle=\langle V, Q\rangle+\langle v\partial_\nu V, q\rangle_{\partial\Omega_{+,\varepsilon_0}(\lambda)}
 \end{align}
and
$$\|\rho\|_{L^2(\Omega; e^{\varphi/h})}\leq C(h^2\|Q\|_{L^2(\Omega; e^{\varphi/h})}+h\|q\|_{L^2(\partial\Omega_{+,\varepsilon_0}(\lambda);e^{\varphi/h})}).$$
From \eqref{fv}, we derive that $\rho$ is an $L^2(\Omega)$ solution to the following equation
 \begin{align}
\label{ume1}
\begin{cases}
\displaystyle-v\Delta \rho+k(x)\rho=Q
\ \ \hspace*{1.0cm} &\mathrm{in}\  \Omega,\medskip\\
\displaystyle  \rho=q\hspace*{4.3cm} &\mathrm{on}\  \partial\Omega_{+,\varepsilon_0}(\lambda)
\end{cases}
\end{align}
with $Q\in L^2(\Omega; e^{\varphi/h})$ and $q\in L^2(\partial\Omega; e^{\varphi/h})$. In the equation \eqref{ume1}, taking $Q=-k(x)e^{-\frac{1}{h}(\mathrm{i}\psi+\varphi)}\in L^2(\Omega; e^{\varphi/h})$ and $q=-e^{\frac{1}{h}(\mathrm{i}\psi+\varphi)}\in L^2(\zz; e^{\varphi/h})$, we can easy to see that $$\rho=u(x)-e^{-\frac{1}{h}(\mathrm{i}\psi+\varphi)}=e^{-\frac{1}{h}
(\mathrm{i}\psi+\varphi)} b_2 \in L^2(\Omega; e^{\varphi/h})$$ is a solution of the equation \eqref{ume1}, and
$$\lim_{h\rightarrow 0}\|b_2\|_{L^2(\Omega)}=\lim_{h\rightarrow 0}\|\rho\|_{L^2(\Omega;e^{\varphi/h})}=0.$$
The proof is complete.
\end{proof}

\section{Simultaneous recovery of multiple parameters}\label{sec4}
\subsection{Higher-order linearization methods}
We introduce the fundamental framework of the higher-order linearization method, which is based on the infinite differentiability of the solution concerning a set of input functions $f$ and $g$. Setting
\begin{align}\label{fg}
f(x)=\sum_{l=1}^N\epsilon_lf_l\ \  \mathrm{and} \ g(x)=\sum_{l=1}^N\epsilon_lg_l \ \ \mathrm{for}\ x\in\partial\Omega\ %\mathrm{and}\ N\geq3.
\end{align}
where $f_l\in\xi_1$ for $l=1,2,...,N$, $g_l\in \xi_1^+$ for $l\neq 2$, and $\epsilon=(\epsilon_1,\epsilon_2,..., \epsilon_N )\in (\mathbb{R}^{N})^+$ with $|\epsilon|=\sum_{l=1}^N|\epsilon_l|$ small enough.   It is important to note that $g_2$ can be chosen to be an arbitrary function in $C^{2+\alpha}(\partial\Omega)$, and $\epsilon_2g_2$ can be made sufficiently small such that $g(x)>0$. This input is crucial in overcoming the challenges associated with the inverse problem, refer to the proofs in the subsection \ref{proof1.1} and \ref{proof1.2}.
By the well-posedness of the forward problem in the Section \ref{sec2}, then there exist unique solution $(u,m)\in C^{2+\alpha}(\overline{\Omega})\times C^{2+\alpha}(\overline{\Omega})$ to the following system
\begin{align}
\label{ume}
\begin{cases}
\displaystyle-v\Delta u+\frac{1}{2}|\nabla u|^2+k(x)u=F(x,m)
\ \ \hspace*{1.0cm} &\mathrm{in}\  \Omega,\medskip\\
\displaystyle -v\Delta m-\mathrm{div}(m\nabla u)+r(x)m=0
\hspace*{1.0cm} &\mathrm{in}\ \Omega,\medskip\\
\displaystyle  u=\sum_{l=1}^N\epsilon_lf_l,\ m=\sum_{l=1}^N\epsilon_lg_l \hspace*{4.3cm} &\mathrm{on}\  \partial\Omega.
\end{cases}
\end{align}
Let $(u(x;\epsilon), m(x;\epsilon))$ be the solution of \eqref{ume}, when $\epsilon=0$, we have
\[
(u(x;0),m(x;0))=(0,0)\ \ \mathrm{for}\ x\in \Omega.
\]
Let
\begin{align*}
u^{(1)}:&=\partial_{\epsilon_1}u|_{\epsilon=0}=\lim_{\epsilon\rightarrow 0} \frac{u(x;\epsilon)-u(x;0)}{\epsilon_1},\\
m^{(1)}:&=\partial_{\epsilon_1}m|_{\epsilon=0}=\lim_{\epsilon\rightarrow 0} \frac{m(x;\epsilon)-m(x;0)}{\epsilon_1},
\end{align*}
we obtain that $(u^{(1)}, m^{(1)})$ satisfies the following system:
\begin{align*}
\begin{cases}
\displaystyle-v\Delta u^{(1)}+k(x)u^{(1)}=0
\ \ \hspace*{1.4cm} &\mathrm{in}\  \Omega,\medskip\\
\displaystyle -v\Delta m^{(1)}+r(x)m^{(1)}=0
\hspace*{1.50cm} &\mathrm{in}\ \Omega,\medskip\\
\displaystyle  u^{(1)}=f_1, m^{(1)}=g_1>0\hspace*{5.3cm} &\mathrm{on}\  \partial\Omega.
\end{cases}
\end{align*}
Similarly, we can get
\begin{align*}
u^{(2)}:&=\partial_{\epsilon_2}u|_{\epsilon=0}=\lim_{\epsilon\rightarrow 0} \frac{u(x;\epsilon)-u(x;0)}{\epsilon_2},\\
m^{(2)}:&=\partial_{\epsilon_2}m|_{\epsilon=0}=\lim_{\epsilon\rightarrow 0} \frac{m(x;\epsilon)-m(x;0)}{\epsilon_2},
\end{align*}
satisfies the following system:
\begin{align*}
\begin{cases}
\displaystyle-v\Delta u^{(2)}+k(x)u^{(2)}=0
\ \ \hspace*{1.4cm} &\mathrm{in}\  \Omega,\medskip\\
\displaystyle -v\Delta m^{(2)}+r(x)m^{(2)}=0
\hspace*{1.50cm} &\mathrm{in}\ \Omega,\medskip\\
\displaystyle  u^{(2)}=f_2, m^{(2)}=g_2\hspace*{5.3cm} &\mathrm{on}\  \partial\Omega.
\end{cases}
\end{align*}
Note that $g_2$ is arbitrary. Then, we have the second-order linearization as follows:
\[
u^{(1,2)}:=\partial_{\epsilon_1}\partial_{\epsilon_2}u|_{\epsilon=0},
m^{(1,2)}:=\partial_{\epsilon_1}\partial_{\epsilon_2}m|_{\epsilon=0},
\]
thus we find
\begin{align*}
%\label{umN2}
\begin{cases}
\displaystyle-v\Delta u^{(1,2)}+\nabla u^{(1)}\cdot \nabla u^{(2)}+k(x)u^{(1,2)}=F^{(2)}m^{(1)}m^{(2)}
\ \ \hspace*{1.4cm} &\mathrm{in}\  \Omega,\medskip\\
\displaystyle -v\Delta m^{(1,2)}-\mathrm{div}(m^{(1)}\nabla u^{(2)})-\mathrm{div}(m^{(2)}\nabla u^{(1)})+r(x)m^{(1,2)}=0
\hspace*{1.50cm} &\mathrm{in}\ \Omega,\medskip\\
\displaystyle  u^{(1,2)}=m^{(1,2)}=0\hspace*{5.3cm} &\mathrm{on}\  \partial\Omega.
\end{cases}
\end{align*}
It is worth noting that the non-linear terms of the second-order linearized system are dependent solely on the first-order linearized system. As a result, to determine the higher-order Taylor coefficients, we consider
\begin{align*}
u^{(1,2,...,N)}:&=\partial_{\epsilon_1}\partial_{\epsilon_2}...\partial_{\epsilon_N}u|_{\epsilon=0},\\
m^{(1,2,...,N)}:&=\partial_{\epsilon_1}\partial_{\epsilon_2}...\partial_{\epsilon_N}m|_{\epsilon=0}.
\end{align*}
In a similar manner, we can obtain a series of elliptic systems that can be used to recover the higher-order Taylor coefficients $F^{(i)}$ for $i\in{N}$.

\subsection{Proof of Theorem \ref{alldata}}\label{proof1.1}

$\mathbf{Step\ I}$. We begin by demonstrating the uniqueness of $k(x)$ and $r(x)$. Since the proof of the uniqueness of $k(x)$ is analogous to that of $r(x)$, we only provide the approach for proving the uniqueness of $r(x)$. Using first-order linearization for \eqref{ume} around the solution $(u_0, m_0)=(0,0)$. For $j=1,2$,  $u_j^{(2)}$ and $m_j^{(2)}$ satisfy the following equations
\begin{align}
\label{um1j}
\begin{cases}
\displaystyle-v\Delta u_j^{(2)}+k_ju_j^{(2)}=0
\ \ \hspace*{1.0cm} &\mathrm{in}\  \Omega,\medskip\\
\displaystyle -v\Delta m_j^{(2)}+r_jm_j^{(2)}=0
\hspace*{1.0cm} &\mathrm{in}\ \Omega,\medskip\\
\displaystyle  u_j^{(2)}=f_2, \  m_j^{(2)}=g_2\hspace*{3.3cm} &\mathrm{on}\  \partial\Omega.
\end{cases}
\end{align}
Note that $g_2\in C^{2+\alpha}(\partial\Omega)$ is arbitrary. Subtracting the second equation of \eqref{um1j} with $j=1,2$, we have
\begin{align}
\label{wk1}
\begin{cases}
\displaystyle-v\Delta w+r_1w=(r_2-r_1)m_2^{(2)}
\ \ \hspace*{1.4cm} &\mathrm{in}\  \Omega,\medskip\\
\displaystyle  w=0\hspace*{5.3cm} &\mathrm{on}\  \partial\Omega,
\end{cases}
\end{align}
where $w=m_1^{(2)}-m_2^{(2)}$. Let $z$ is a solution to $-v\Delta z+r_1z=0$ in $\Omega$.  Multiplyig both sides of \eqref{wk1} by $z$ and integrating over $\Omega$, according to  Green's formula and  the fact $\partial_\nu m_1^{(2)}=\partial_\nu m_2^{(2)}$ from \eqref{cod}, we have
 \begin{align*}
 &\int_{\Omega}(r_2-r_1)m_2^{(2)}zdx=0.
\end{align*}
From the Lemma \ref{part-sol}, it follows that there exist solutions $m_2^{(2)}\in H^1(\Omega), z\in H^1(\Omega)$  of the following form
\begin{equation}
\label{CGO}
\begin{aligned}
m_2^{(2)}:&=e^{-\frac{1}{h}(\mathrm{i}\psi+\varphi)}(e^{\mathrm{i}x\cdot\xi}+b_1),\\
z:&=e^{\frac{1}{h}(\mathrm{i}\psi+\varphi)}(1+b_2),
\end{aligned}
\end{equation}
where $\|b_j\|_{L^2(\Omega)}\leq Ch$ for $j=1,2$, $\psi=\eta\cdot x$ and $\varphi=\lambda\cdot x$ with $\lambda, \eta\in \mathbb{S}^{n-1}$ and $\lambda\perp \eta $. According to Lemma \ref{part-sol}, we choose $a=e^{{\mathrm{i}x\cdot\xi}}$ for the first solution of \eqref{CGO}, with $\xi\in \mathbb{R}^n$ and $\{ \lambda,\eta,\xi\}$ forming an orthogonal triplet (we need the dimension $n\geq 3$). For the second solution, we choose $a=1$.

Then, by the Lemma \ref{fdata}, there exist the sequences of solution $\hat{m}_l\in C^{2+\alpha}(\overline{\Omega})$ of the second equation in \eqref{um1j} and $\hat{z}_l\in C^{2+\alpha}(\overline{\Omega})$ to the equation $-v\Delta z+r_1z=0$, such that
\begin{align*}
\lim_{l\rightarrow\infty}\|\hat{m}_l-m_2^{(2)}\|_{L^2(\Omega)}=0,\
\lim_{l\rightarrow\infty}\|\hat{z}_l-z\|_{L^2(\Omega)}=0,
\end{align*}
thus we have
\begin{align*}%\label{k12uz}
 \int_{\Omega}(r_2-r_1)\hat{m}_l\hat{z}_ldx=0.
\end{align*}
Let $l\rightarrow\infty$, and combing with \eqref{CGO}, we obtain
 \begin{align}\label{k12r12}
 \int_{\Omega}(r_2-r_1)(e^{\mathrm{i}\xi\cdot x}+b_1)(1+b_2)dx=0,
\end{align}
as $h\rightarrow0$ in \eqref{k12r12}, we have
 \begin{align}\label{fullr}
 \int_{\Omega}(r_2-r_1)e^{\mathrm{i}\xi\cdot x}dx=0 \ \  \mathrm{for}\ \xi\in\mathbb{R}^n.
\end{align}
 Setting
 $\tilde{r}=r_1-r_2$ in $\Omega$ and vanish outside $\Omega$, from \eqref{fullr}, we have the Fourier transform of $\tilde{r}$ is zero for every $\xi \in \mathbb{R}^n$, we conclude that $\tilde{r}=0$. This implies $r_2=r_1$ in $\Omega$.

\medskip

$\mathbf{Step\ II}$. Fix $r_1=r_2, k_1=k_2$. We consider a second-order linearization. For $j=1,2$, we have $u_j^{(1,2)}$ and  $m_j^{(1,2)}$ satisfy the following equations
\begin{align}
\label{fum}
\begin{cases}
\displaystyle-v\Delta u_j^{(1,2)}+\nabla u^{(1)}\cdot \nabla u^{(2)}+k(x)u_j^{(1,2)}=F_j^{(2)}m^{(1)}m^{(2)}
\ \ \hspace*{1.4cm} &\mathrm{in}\  \Omega,\medskip\\
\displaystyle -v\Delta m^{(1,2)}-\mathrm{div}(m^{(1)}\nabla u^{(2)})-\mathrm{div}(m^{(2)}\nabla u^{(1)})+r(x)m^{(1,2)}=0
\hspace*{1.50cm} &\mathrm{in}\ \Omega,\medskip\\
\displaystyle  u_j^{(1,2)}=m^{(1,2)}=0\hspace*{5.3cm} &\mathrm{on}\  \partial\Omega.
\end{cases}
\end{align}
Notice that $u^{(1)},u^{(2)}, m^{(1)}, m^{(2)}$ are not dependent on $F_j^{(2)}$. Subtracting  the first equation of \eqref{fum} with $j = 1,2,$ we find
\begin{align}
\label{F2}
\begin{cases}
\displaystyle-v\Delta p+k(x)p=(F_1^{(2)}-F_2^{(2)})m^{(1)}m^{(2)}
\ \ \hspace*{1.4cm} &\mathrm{in}\  \Omega,\medskip\\
\displaystyle  p=0\hspace*{5.3cm} &\mathrm{on}\  \partial\Omega.
\end{cases}
\end{align}
where $p=u_1^{(1,2)}-u_2^{(1,2)}$. Let $\varpi$ be a solution to the equation $-v\Delta \varpi+k\varpi=0$ in $\Omega$. With the same DN map, by differentiating $\epsilon_1$ and $\epsilon_2$ yields $\partial_\nu u_1^{(1,2)}=\partial_\nu u_2^{(1,2)}$ from \eqref{cod}. Then, multiplying both sides of \eqref{F2} by $\varpi$, an integration by parts, we derive that
\begin{align*}
 \int_{\Omega}(F_2^{(2)}-F_1^{(2)})m^{(1)}m^{(2)}\varpi dx=0,
\end{align*}
we take
\begin{align*}
m^{(2)}:&=e^{-\frac{1}{h}(\mathrm{i}\psi+\varphi)}(1+b_1),\\
\varpi:&=e^{\frac{1}{h}(\mathrm{i}\psi+\varphi)}(e^{\mathrm{i}\xi\cdot x}+b_2),
\end{align*}
as in \eqref{CGO}, where $\| b_j\|_{L^2(\Omega)}\leq Ch$ for $j=1,2$, $\xi\in \mathbb{R}^n$ and $\{ \lambda,\eta,\xi\}$ is an orthogonal triplet. Now, by Lemma \ref{fdata}, there exist a sequence of solution $\hat{m}_l\in C^{2+\alpha}(\overline{\Omega})$ to \eqref{um1j} and $\hat{\varpi}_l\in C^{2+\alpha}(\overline{\Omega})$ to the equation $-v\Delta \varpi+k\varpi=0$, such that
\begin{align*}
\lim_{l\rightarrow\infty}\|\hat{m}_l-m^{(2)}\|_{L^2(\Omega)}=0,\
\lim_{l\rightarrow\infty}\|\hat{\varpi}_l-\varpi\|_{L^2(\Omega)}=0.
\end{align*}
Let $l\rightarrow\infty$, we have
\begin{align*}
 \int_{\Omega}(F_2^{(2)}-F_1^{(2)})m^{(1)}(e^{\mathrm{i}\xi\cdot x}+b_1)(1+b_2)dx=0,
\end{align*}
as $h\rightarrow 0$, and combing with the Fourier transform of $\tilde{F}=(F_2^{(2)}-F_1^{(2)})m^{(1)}$ in $\Omega$ and vanish outside $\Omega$ is zero for every $\xi\in \mathbb{R}^n$, we have
\[
(F_2^{(2)}-F_1^{(2)})m^{(1)}=0\ \ \mathrm{in}\ \Omega.\]
Since $g_1>0$, according to the  maximum principle, we have $m^{(1)}>0$ in $\Omega$, thus $F_2^{(2)}=F_1^{(2)}$ in $\Omega$.
Finally, we can show that this is valid for $N\geq 3$ of \eqref{fg} by mathematical induction. In other words, for any $i\in N$, we have $F_1^{(i)}=F_2^{(i)}$. Hence, $F_1(x,m)=F_2(x,m)$ in $\Omega\times \mathbb{R}^+$.

\subsection{Proof of Theorem \ref{pdata}}\label{proof1.2}
The argument is similar to the proof of theorem \ref{alldata}, and we prove this result with partial data.

$\mathbf{Step\ I}$. Let us introduce the following boundary value for the system \eqref{um0}
\begin{align*}%\label{pfg}
f(x)=\sum_{l=1}^N\epsilon_lf_l\ \  \mathrm{and} \ g(x)=\sum_{l=1}^N\epsilon_lg_l \ \ \mathrm{for}\ x\in\partial\Omega,
\end{align*}
where $f_l\in\xi_2$ for $l=1,2...,N$, $g_l\in \xi_2^+$ for $l\neq 2$, and $\epsilon=(\epsilon_1,\epsilon_2,...,\epsilon_n)\in (\mathbb{R}^{N})^+$ with $|\epsilon|=\sum_{l=1}^N|\epsilon_l|$ small enough. It is important to note that $g_2$ can be chosen to be an arbitrary function in $C_0^{2+\alpha}(\mathcal{U}_+)$, and that $\epsilon_2g_2$ can be made sufficiently small such that $g(x)>0$.

\medskip

$\mathbf{Step\ II}$. The uniqueness of $k(x)$ and $r(x)$ is first proved by first-order linearization. For $j=1,2$,  $u_j^{(2)}$ and $m_j^{(2)}$ satisfy the following equations
\begin{align}
\label{um1jp}
\begin{cases}
\displaystyle-v\Delta u_j^{(2)}+k_ju_j^{(2)}=0
\ \ \hspace*{1.4cm} &\mathrm{in}\  \Omega,\medskip\\
\displaystyle -v\Delta m_j^{(2)}+r_jm_j^{(2)}=0
\hspace*{1.50cm} &\mathrm{in}\ \Omega,\medskip\\
\displaystyle  u_j^{(2)}=f_2,\  m_j^{(2)}=g_2\hspace*{5.3cm} &\mathrm{on}\  \partial\Omega.
\end{cases}
\end{align}
Since the procedure for proving $k(x)$ is similar to that for $r(x)$, we next give the procedure for proving only $r(x)$. Subtracting the second equation of \eqref{um1jp} for $j=1,2$, we have
\begin{align}
\label{w}
\begin{cases}
\displaystyle-v\Delta w+r_1w=(r_2-r_1)m_2^{(2)}
\ \ \hspace*{1.4cm} &\mathrm{in}\  \Omega,\medskip\\
\displaystyle  w=0\hspace*{5.3cm} &\mathrm{on}\  \partial\Omega,
\end{cases}
\end{align}
where $w=m_1^{(2)}-m_2^{(2)}$. There is  a solution
$$m^{(2)}_2=e^{\frac{1}{h}(\mathrm{i}\psi+\varphi)}(e^{\mathrm{i}x\cdot\xi}+b_2)\in H^1(\Omega)$$
to the second equation of \eqref{um1jp} with respect to $r_2(x)$ such that
$$\lim_{h\rightarrow 0}\|b_2\|_{L^2(\Omega)}=0,$$
where $\psi=\lambda\cdot x$, $\varphi=\eta\cdot x$ with $\lambda, \eta\in\mathbb{S}^{n-1}$, and $\{\lambda, \eta, \xi\}$ is an orthogonal triplet. Consider  the following equation
\begin{align}
\label{v11}
\begin{cases}
\displaystyle-v\Delta v_1+r_1v_1=0
\ \ \hspace*{1.4cm} &\mathrm{in}\  \Omega,\medskip\\
\displaystyle  v_1=0\hspace*{3.3cm} &\mathrm{on}\  \partial\Omega_{+,\varepsilon_0}(\lambda).
\end{cases}
\end{align}
From the Lemma \ref{part-sol}, there is a solution
$$v_1=e^{-\frac{1}{h}(i\psi+\varphi)}(1+b_1),$$
satisfies the equation \eqref{v11}, where $\lim_{h\rightarrow 0}\|b_1\|_{L^2(\Omega)}=0.$ Then by the Lemma \ref{prung}, there are two sequences of functions $\{\hat{m}_l\}_{l=1}^\infty$, $\{v_1^l\}_{l=1}^\infty\in C^{2+\alpha}(\overline{\Omega})$, such that $v_1^l$ are solutions to \eqref{v11}, and $\hat{m}_l$ are solutions to equation as follows
\begin{align}
\label{uk}
\begin{cases}
\displaystyle-v\Delta \hat{m}_l+r_1\hat{m}_l=0
\ \ \hspace*{1.4cm} &\mathrm{in}\  \Omega,\medskip\\
\displaystyle  \hat{m}_l=0\hspace*{3.3cm} &\mathrm{on}\  \partial\Omega_{-,\varepsilon_0}(\lambda),
\end{cases}
\end{align}
and $\hat{m}_l\rightarrow m_2^{(2)}$, $v_1^l\rightarrow v_1$ in $L^2(\tilde{\Omega})$ as $l\rightarrow\infty$.
For $j=1,2$, we define
$$S_j=\{z\in H^1(\Omega)|(-v\Delta+r_j)z=0\ \ \mathrm{in}\ \Omega\},$$
and the map $\mathcal{M}:S_1\rightarrow S_2$ is defined by
$$\mathcal{M}(z_1)=z_2,$$
where $z_2$ is the solution to
\begin{align}
\label{w}
\begin{cases}
\displaystyle(-v\Delta +r_2)z_2=0
\ \ \hspace*{1.4cm} &\mathrm{in}\  \Omega,\medskip\\
\displaystyle  z_2=z_1\hspace*{5.3cm} &\mathrm{on}\  \partial\Omega.
\end{cases}
\end{align}
By the trace Theorem, $z_1|_{\partial\Omega}\in H^{1/2}(\partial\Omega)$ and the map $\mathcal{M}$ is well define. Hence, we have
\begin{align}
\label{w}
\begin{cases}
\displaystyle(-v\Delta +r_2)(\mathcal{M}(\hat{m}_l)-\hat{m}_l)=(r_1-r_2)\hat{m}_l
\ \ \hspace*{1.4cm} &\mathrm{in}\  \Omega,\medskip\\
\displaystyle \mathcal{M}(\hat{m}_l)-\hat{m}_l=0\hspace*{5.3cm} &\mathrm{on}\  \partial\Omega.
\end{cases}
\end{align}
Multiplying both sides of equation \eqref{w} by the function $v_1^l$
and integration by parts implies
\begin{align}\label{kk}
\int_{\Omega}(r_1-r_2)\hat{m}_lv_1^ldx=\int_{\partial \Omega}v\partial_{\nu}(\mathcal{M}(\hat{m}_l)-\hat{m}_l)v_1^ldS.
\end{align}
 We now claim that the equation \eqref{kk} is zero holds for $\lambda\in \{ \lambda\in \mathbb{S}^{n-1}||\lambda-\lambda'|<\varepsilon_0\}$. We know that $\mathcal{U}_{\pm}$ is a neighborhood of $\partial\Omega_{\pm}(\lambda')$, then
 $$\{x\in \zz|0<\lambda'\cdot\nu<2\varepsilon_0\}\subset\mathcal{U}_{-},$$
and
$$\{x\in \zz|\lambda'\cdot\nu>-2\varepsilon_0\}\subset\mathcal{U}_{+}.$$
Therefore, we have
\begin{align*}
\mathrm{supp}\ \hat{m}_l|_{\partial\Omega}\subset \{x\in \partial \Omega|\lambda\cdot\nu\geq -\varepsilon_0\}\subset\{x\in \partial \Omega|\lambda'\cdot\nu>-2 \varepsilon_0\}\subset \mathcal{U}_{+},
\end{align*}
and
$$\{ x\in\partial\Omega| \lambda'\cdot\nu\geq 2\varepsilon_0\}\subset \partial\Omega_{+,\varepsilon_0}(\lambda).$$
Note that $\hat{m}_l|_{\partial\Omega}=\mathcal{M}({\hat{m}}_l)|_{\partial\Omega}\in C^{2+\alpha}_0(\mathcal{U}_+)$ and $v_1^l=0$ on $\partial\Omega_{+,\varepsilon_0}(\lambda)$.
Then we derive that
\begin{align*}
\small
\bigg{|}\int_{\partial \Omega}v\partial_{\nu}(\mathcal{M}(\hat{m}_l)-\hat{m}_l)v_1^ldS\bigg{|}&=
\bigg{|}\int_{\lambda'\cdot\nu\geq 2\varepsilon_0}v\partial_{\nu}(\mathcal{M}(\hat{m}_l)-\hat{m}_l)v_1^ldS\\&+
\int_{0<\lambda'\cdot\nu< 2\varepsilon_0}v\partial_{\nu}(\mathcal{M}(\hat{m}_l)-\hat{m}_l)v_1^ldS\\&+
\int_{\lambda'\cdot\nu\leq 0}v\partial_{\nu}(\mathcal{M}(\hat{m}_l)-\hat{m}_l)v_1^ldS\bigg{|}\\
&=0,
\end{align*}
thus the claim is holds. From $r_1=r_2$ in $\Omega'$, we have
$$\int_{\tilde{\Omega}}(r_1-r_2)\hat{m}_lv_1^ldx=0,$$
as $l\rightarrow\infty$ and $h\rightarrow 0$, we have
\begin{align}\label{pep}
 &\int_{\tilde{\Omega}}(r_1-r_2)e^{\mathrm{i}\xi\cdot x}dx=0
\end{align}
 holds for
$\lambda\in \{ \lambda\in \mathbb{S}^{n-1}||\lambda'-\lambda|<\varepsilon_0\}$. It follows that \eqref{pep} holds for $\xi$ in an open cone in $\mathbb{R}^n$. The Fourier transform of $r=r_1-r_2$, which is compactly supported in $\tilde{\Omega}$ and vanishes outside $\tilde{\Omega}$, is zero in an open set. By the Paley-Wiener theorem, the Fourier transform is analytic, and this implies that $r\equiv 0$. Therefore, we have  $r_1=r_2$ in ${\Omega}$.

\medskip

$\mathbf{Step\ III}$. We consider a second order linearization. For $j=1,2$, we have $u_j^{(1,2)}$ and  $m_j^{(1,2)}$ satisfy the following equations
\begin{align}
\label{umN12}
\begin{cases}
\displaystyle-v\Delta u_j^{(1,2)}+\nabla u^{(1)}\cdot \nabla u^{(2)}+k(x)u_j^{(1,2)}=F_j^{(2)}m^{(1)}m^{(2)}
\ \ \hspace*{1.4cm} &\mathrm{in}\  \Omega,\medskip\\
\displaystyle -v\Delta m_j^{(1,2)}-\mathrm{div}(m^{(1)}\nabla u^{(2)})-\mathrm{div}(m^{(2)}\nabla u^{(1)})+r(x)m_j^{(1,2)}=0
\hspace*{1.50cm} &\mathrm{in}\ \Omega,\medskip\\
\displaystyle  u_j^{(1,2)}=m_j^{(1,2)}=0 \hspace*{5.3cm} &\mathrm{on}\  \partial\Omega.
\end{cases}
\end{align}
Subtracting the first equation of \eqref{umN12} with $j = 1,2,$ it follows that
\begin{align*}
%\label{umN2}
\begin{cases}
\displaystyle-v\Delta p+k(x)p=(F_1^{(2)}-F_2^{(2)})m^{(1)}m^{(2)}
\ \ \hspace*{1.4cm} &\mathrm{in}\  \Omega,\medskip\\
\displaystyle  p=0\hspace*{5.3cm} &\mathrm{on}\  \partial\Omega,
\end{cases}
\end{align*}
where $p=u_1^{(1,2)}-u_2^{(1,2)}$. Let $\vartheta$ is a solution to the equation \begin{align}
\label{vv1}
\begin{cases}
\displaystyle-v\Delta \vartheta+k\vartheta=0
\ \ \hspace*{1.4cm} &\mathrm{in}\  \Omega,\medskip\\
\displaystyle  \vartheta=0\hspace*{3.3cm} &\mathrm{on}\  \partial\Omega_{+,\varepsilon_0}(\lambda).
\end{cases}
\end{align}
 From the Lemma \ref{prung}, there are two sequences of functions $\{\hat{m}_l\}_{l=1}^\infty$, $\{\vartheta_l\}_{l=1}^\infty \in C^{2+\alpha}(\overline{\Omega})$, such that $\vartheta_l$ are solutions to \eqref{vv1}, and $\hat{m}_l$ are solutions to equation in \eqref{uk}, and $\hat{m}_l\rightarrow m^{(2)}, \vartheta_l\rightarrow \vartheta$ in $\tilde{\Omega}$. Similar to the proof in step II, we obtain that $$\int_{{\Omega}}(F^{(2)}_1-F^{(2)}_2)m^{(1)}\hat{m}_l\vartheta_ldx=0$$  also holds for $\lambda$ sufficiently close to $\lambda'$ on the unit sphere. As $l\rightarrow \infty,h\rightarrow 0$, and $F_1^{(2)}=F_2^{(2)}$ in $\Omega'$, we have
\begin{align*}%\label{pe}
\int_{\tilde{\Omega}}(F_2^{(2)}-F_1^{(2)})m^{(1)}e^{\mathrm{i}x\cdot\xi}dx=0.
\end{align*}
 Since the Fourier transform of $F=(F_2^{(2)}-F_1^{(2)})m^{(1)}$ in $\tilde{\Omega}$ and vanish outside $\tilde{\Omega}$ is zero in an open set. Thus we have
\[
(F_2^{(2)}-F_1^{(2)})m^{(1)}=0 \ \ \mathrm{for}\ x\in\tilde{\Omega}.\]
Due to $g_1>0$, by the  maximum principle we have $m^{(1)}>0$ in $\Omega$, then we obtain $F_2^{(2)}=F_1^{(2)}$ in $\tilde{\Omega}$. According to $F_2^{(2)}=F_1^{(2)}$ in ${\Omega'}$, we derive that $F_2^{(2)}=F_1^{(2)}$ in ${\Omega}$. Finally, by the mathematical induction, we can show that this is true for $N\geq 3$. In other words, for any $i\in N$, we have $F_1^{(i)}=F_2^{(i)}$.
Hence, $F_1(x,z)=F_2(x,z)$ in $\Omega\times \mathbb{R}^+$.
\section*{Acknowledgments}
\addcontentsline{toc}{section}{Acknowledgments}
 The work of H. Liu is supported by the Hong Kong RGC General Research Funds (projects 11311122, 11300821 and 12301420),  the NSFC/RGC Joint Research Fund (project N\_CityU101/21), and the ANR/RGC Joint Research Grant, A\_CityU203/19. The work of G. Zheng were supported by the NSF of China (12271151) and NSF of Hunan (2020JJ4166).

\end{document}